\def\xto#1{\xrightarrow[]{#1}}
\def\q{{\frak q}}
\def \hom{\mathop{\sf Hom}\nolimits}
\def \F{\mathop{\mathcal F}\nolimits}
\def \F{\mathop{\mathcal F}\nolimits}
\def \G{\mathop{\mathcal G}\nolimits}
\def\Grp{\bf Gpd }
\numberwithin{equation}{section}
\def \colim {\mathop{\sf colim}\nolimits}
\def \tl{\mathop{\sf 2lim}\nolimits}
\def \tc {\mathop{\sf 2colim}\nolimits}
\def\1{^{-1}}
\newtheorem{De}{Definition}[section]
\newtheorem{Th}[De]{Theorem}
\newtheorem{Pro}[De]{Proposition}
\newtheorem{Le}[De]{Lemma}
\newtheorem{Co}[De]{Corollary}
\newtheorem{Rem}[De]{Remark}
\newtheorem{Ex}[De]{Example}
\begin{document}

\title{Fundamental groupoid as a  terminal costack}

\author[I. Pirashvili]{Ilia  Pirashvili}
\address{
Department of Mathematics\\
University of Leicester\\
University Road\\
Leicester\\
LE1 7RH, UK} \email{ip96@leicester.ac.uk}
\maketitle

\begin{abstract} Let $X$ be a topological space. We denote by $\pi_0(X)$ the set of connected components of $X$ and by $\Pi_1(U)$ the fundamental groupoid. In this paper we prove that for good topological spaces the assignments $U\mapsto\pi_0(U)$ and $U\mapsto\Pi_1(U)$ are the terminal cosheaf and costack respectively. 
\end{abstract}

\bigskip
\section{Introduction}

For quit some time now, stack theory, which is the 2-mathematical analogue of sheaf theory, has enjoyed a significant amount of popularity, especially in the last decade. But despite the fact that sheaves and stacks have been studied deeply for so long, the very closely related cosheaves and costacks have been virtually ignored. In this paper we aim to demonstrate their importance and usefulness. We will show that for good topological spaces the assignment $U\mapsto \pi_0(U)$ is the terminal cosheaf of sets, and the assignment $U\mapsto \Pi_1(U)$ is the terminal costack of groupoids. 

Though the main result of this paper is conceptual rather then calculatory, Example. \ref{ex} demonstrates the advantage of 2-mathematics. Using 2-colimits rather then colimits, we are able to use Van Kampens theorem to calculate the fundamental groupoid of $S^1$ significantly easier.

This alone is a rather nice fact but it could have other important implications. This result generalises in two directions which will be in forthcomming papers. The first one is rather obvious, that the fundamental $n$-groupoid would most likely be the terminal co-$n$-stack with values in $n$-groupoids. The other is that if we define the fundamental groupoid to be the terminal costack, we don't need notions like paths and coverings. Indeed since the terminal costack (resp. terminal cosheaf) was the costackification (resp. cosheafification) of the constant functor with values in the one point category (resp. the one point set), Thm. \ref{vo} gives hope that we could define the fundamental groupoid for other more pathological spaces, and indeed for general 2-sites as well. 

This paper is part of my PhD thesis at the University of Leicester under the supervision of Dr. Frank Neumann. He introduced me to the fundamental groupoid and insipred much of this paper, for which I would like to thank him.

\section{$\pi_0$ as a terminal cosheaf}

Let $X$ be a topological space. Denote by $\mathfrak{Off}(X)$ the poset of open subsets of $X$. 

\begin{De}\label{cosh} Let $G$ be a (covariant) functor from the poset $\mathfrak{Off}(X)$ to a category $\bf A$. We say that $G$ is a cosheaf, if for any open covering  $U=\bigcup_iU_i$ the diagram
$$\xymatrix{\coprod_{i,j}G(U_i\cap U_j) \ar@<.5ex>[r]^{ \ \ \ \ \ b_0} \ar@<-.5ex>[r]_{ \ \ \ \ \ b_1} & \coprod_i G(U_i) \ar[r]^{ \ \ a} & G(U) \ar[r] & 1 }$$
is exact, in other words,  
$G(U)$ is the coequiliser of $b_0,b_1$. That is to say, $G$ is a cosheaf if and only if for any object $S$ the functor $F$ defined by $F(U)= \hom_{\bf A}(G(U),S)$ is a sheaf of sets \cite{br}.
\end{De} 

Let $G:\mathfrak{Off}(X)\rightarrow {\bf A}$ be a functor. We say that $\hat{G}:\mathfrak{Off}(X)\rightarrow {\bf A}$ is the associated cosheaf of $G$ if there is given a natural transformation  $\epsilon:\hat{G}\rightarrow G$, and for all cosheafs $G':\mathfrak{Off}(X)\rightarrow {\bf A}$ and all natural transformations $\varphi:G'\rightarrow G$, there is a unique natural transformation $\hat{\varphi}:G'\rightarrow \hat{G}$ such that the diagram
$$\xymatrix{G'\ar[r]^{\varphi}\ar[dr]_{\hat{\varphi}} & G \\
	           & \hat{G}\ar[u]_{\epsilon}}$$
commutes. 

Observe the following theorem    \cite[Theorem  6.28] {jajr}.

\begin{Th}\label{vo} Assuming Vop\v{e}nka's  principle, every full subcategory of a locally presentable category $\bf K$, closed under colimits in $\bf K$, is coreflective.   
\end{Th}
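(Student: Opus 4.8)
The plan is to reduce coreflectivity of such a subcategory $\mathcal A\subseteq{\bf K}$ to the assertion that $\mathcal A$ is itself locally presentable. Write $\iota\colon\mathcal A\hookrightarrow{\bf K}$ for the inclusion, so that $\mathcal A$ is coreflective precisely when $\iota$ admits a right adjoint. First I would dispose of the easy half: since $\bf K$ is cocomplete and $\mathcal A$ is closed under colimits formed in $\bf K$, the category $\mathcal A$ is itself cocomplete and $\iota$ preserves all small colimits; in particular $\iota$ preserves $\lambda$-filtered colimits for every regular cardinal $\lambda$. Thus the only obstruction to applying an adjoint functor theorem to $\iota$ is a solution-set (size) issue, and it is exactly this that Vop\v{e}nka's principle removes.

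The heart of the proof is to show, assuming Vop\v{e}nka's principle, that $\mathcal A$ is accessible. Fix $\mu$ with $\bf K$ locally $\mu$-presentable; then $\mathcal A$ is closed under $\lambda$-filtered colimits for every $\lambda\geq\mu$, and what must be produced is a single regular cardinal $\lambda$ together with a \emph{set} of $\lambda$-presentable objects of $\mathcal A$ generating $\mathcal A$ under $\lambda$-filtered colimits. Without Vop\v{e}nka's principle the $\lambda$-presentable objects of $\mathcal A$ might form a proper class for every $\lambda$, with no small subfamily detecting all of $\mathcal A$. Vop\v{e}nka's principle rules this out: in the equivalent form stating that the ordered class of ordinals does not fully embed into the category of graphs, it forbids a locally presentable category from carrying a large rigid family of objects. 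Combining this with the standard uniformisation of presentation ranks — choosing $\lambda$ large enough that the $\lambda$-presentable objects of $\mathcal A$ are, up to isomorphism, a set which is closed under the needed colimits and generates $\mathcal A$ — yields that $\mathcal A$ is accessible. Being also cocomplete, $\mathcal A$ is then locally presentable.

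It remains to run the adjoint functor theorem. Here one uses the general principle that a cocontinuous functor whose domain is locally presentable is automatically a left adjoint, the requisite solution sets being furnished by local presentability. Applied to $\iota\colon\mathcal A\to{\bf K}$ — cocontinuous by the first paragraph, with locally presentable domain by the second — this produces a right adjoint $R\colon{\bf K}\to\mathcal A$ with $\iota\dashv R$. Since $\iota$ is fully faithful, the unit of the adjunction is an isomorphism, so $R$ restricts to the identity on $\mathcal A$ and $\iota R\colon{\bf K}\to{\bf K}$ is a coreflector: for each object $K$ the counit $\iota RK\to K$ is the universal morphism into $K$ from an object of $\mathcal A$. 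Hence $\mathcal A$ is coreflective in $\bf K$.

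The main obstacle is precisely the accessibility step. Unlike the dual situation of a subcategory closed under limits, one cannot pass to opposite categories — the opposite of a locally presentable category is essentially never locally presentable — so the bound on the presentation ranks of the objects of $\mathcal A$ has to be extracted directly, and it is here that Vop\v{e}nka's principle is genuinely indispensable; indeed the theorem is equivalent to it.
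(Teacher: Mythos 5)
There is nothing in the paper to compare against: Theorem \ref{vo} is not proved there, it is quoted from Ad\'amek--Rosick\'y \cite[Theorem 6.28]{jajr}, and the text explicitly says the result is not used later. Measured against the textbook proof, your outer skeleton is the standard and correct one: closure under colimits in the cocomplete ${\bf K}$ makes $\mathcal{A}$ cocomplete with cocontinuous inclusion; if $\mathcal{A}$ is accessible, then being cocomplete it is locally presentable, a colimit-preserving functor from a locally presentable category into ${\bf K}$ has a right adjoint, and full faithfulness of the inclusion turns that right adjoint into a coreflector. All of that is fine.

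The genuine gap is the middle paragraph, which is where the entire content of the theorem sits. The sentence ``combining this with the standard uniformisation of presentation ranks --- choosing $\lambda$ large enough that the $\lambda$-presentable objects of $\mathcal{A}$ are, up to isomorphism, a set which \dots generates $\mathcal{A}$ under $\lambda$-filtered colimits'' is not an argument: it is a restatement of the accessibility of $\mathcal{A}$, i.e.\ of exactly what has to be proved. The consequence of Vop\v{e}nka's principle you invoke (no large rigid family of objects in a locally presentable category, equivalently the ordinals do not fully embed into graphs) does not by itself bound presentation ranks or produce a small generating family; in \cite{jajr} the deduction is a nontrivial chain: Vop\v{e}nka's principle implies that \emph{every} full subcategory of a locally presentable category is bounded (proved by coding objects into graphs and using the ordinal-embedding formulation), and a bounded full subcategory closed under (directed) colimits is the colimit closure of a set of objects, hence accessible and coreflective. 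None of that is reproduced or replaced by your sketch, so as written the proof assumes its key step. Finally, the closing claim that the statement is ``equivalent'' to Vop\v{e}nka's principle is neither proved nor needed, and it is not what the cited source establishes --- only the implication from Vop\v{e}nka's principle is proved there, the converse being left open; I would delete that remark or flag it as a conjecture.
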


As a corollary of this theorem, we have immediately that, assuming Vop\v{e}nka's  principle, cosheafification exists whenever $G$ has values in sets (or many other categories, like groups etc). However, we do not make use of this fact. We are mainly interested in the terminal cosheaf, which is easy  to construct under the assumption that $X$ is locally connected. For a topological space $X$ we let $\pi_0(X)$ be the set of all connected components of $X$.

\begin{Le} Let $X$ be a locally connected topological space. Then $U\mapsto \pi_0(U)$ is a cosheaf.
\end{Le}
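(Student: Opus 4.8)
The plan is to verify the coequalizer condition of Definition \ref{cosh} directly, using the hypothesis that $X$ is locally connected. The key structural fact is that in a locally connected space, connected components of open sets are themselves open, so $\pi_0(U)$ is genuinely the set of summands in the partition of $U$ into its open connected pieces. Thus for an open cover $U=\bigcup_i U_i$, I would first check the easy part: the map $a\colon \coprod_i\pi_0(U_i)\to\pi_0(U)$ is surjective, since every point of $U$ lies in some $U_i$, hence every component of $U$ meets some $U_i$ and receives a component of that $U_i$. I would also note $a\circ b_0 = a\circ b_1$, which is immediate because the inclusion $U_i\cap U_j\hookrightarrow U$ factors through both $U_i$ and $U_j$.

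The substance is to show $\pi_0(U)$ has the universal property: if $f\colon \coprod_i\pi_0(U_i)\to S$ is any map of sets with $f\circ b_0 = f\circ b_1$, then $f$ descends uniquely through $a$. Uniqueness is forced by surjectivity of $a$. For existence, I would argue that the relation on $\coprod_i\pi_0(U_i)$ generated by ``$c\in\pi_0(U_i)$ and $c'\in\pi_0(U_j)$ are identified whenever some component of $U_i\cap U_j$ maps into both'' has as its quotient precisely $\pi_0(U)$; then $f$, being constant on the generators of this relation by the hypothesis $f\circ b_0=f\circ b_1$, is constant on its transitive closure and so factors. Concretely: two components $c,c'$ of the various $U_i$ map to the same component of $U$ if and only if they can be joined by a finite chain $c = c_0, c_1,\dots,c_n=c'$ where consecutive $c_k,c_{k+1}$ (living in $U_{i_k}$ and $U_{i_{k+1}}$ respectively) share a common component of $U_{i_k}\cap U_{i_{k+1}}$.

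The main obstacle, and the step deserving genuine care, is exactly this last claim — that the equivalence relation ``map to the same component of $U$'' coincides with the transitively generated ``chain'' relation. One inclusion is trivial. For the other, given a component $V$ of $U$ and two components $c\in\pi_0(U_i)$, $c'\in\pi_0(U_j)$ both landing in $V$, I would use connectedness of $V$: the sets $\{U_k\cap V\}_k$ form an open cover of $V$, and one shows the chain-connectedness of points within a connected set relative to an open cover (a standard ``clopen'' argument — the set of points chain-reachable from a fixed point is open and closed in $V$). Choosing points $x\in c$, $x'\in c'$ and a chain of the cover connecting them within $V$ then produces the required chain of components, since each $U_k\cap V$ lies in a single component of $U_k$. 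Local connectedness is used twice: to know components are open (so $\pi_0$ is well behaved and the cover $\{U_k\cap V\}$ consists of open sets), and implicitly to identify $\pi_0(U)$ with the actual partition rather than merely a quotient. I would also remark at the end that $\bf A = \sf Set$ here, and that this is the special case relevant to the rest of the paper.
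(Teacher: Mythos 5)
Your overall strategy (verifying the coequalizer property of Definition \ref{cosh} directly) is sound, and most of the steps are fine: surjectivity of $a$, uniqueness of the factorization, and the reduction to showing that two components $c\in\pi_0(U_i)$, $c'\in\pi_0(U_j)$ landing in the same component $V$ of $U$ satisfy $f(c)=f(c')$. The one step that fails as written is the claim that ``each $U_k\cap V$ lies in a single component of $U_k$.'' This is false in general: $U_k\cap V$ is the union of \emph{all} components of $U_k$ contained in $V$, and there may be many of them (already $U=V=\mathbb{R}$ with $U_1=(-\infty,1)\cup(2,\infty)$ gives a disconnected $U_1\cap V$). Consequently a chain of cover sets $U_{k_0}\cap V,\dots,U_{k_n}\cap V$ with consecutive members intersecting does not automatically refine to a chain of components: $U_{k_m}\cap V$ may meet its predecessor inside one of its components and its successor inside a different one, and those lie in different components of $U_{k_m}$, so the component chain breaks. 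The repair stays entirely within your argument: run the clopen argument one level finer, on the cover of $V$ by all components of all the $U_k$ that are contained in $V$ (these are open precisely because $X$ is locally connected). The set of points of $V$ reachable from a fixed $x\in c$ by a finite chain of such components is open and closed in $V$, hence equals $V$; consecutive members of such a chain share a point and therefore share a component of the relevant $U_k\cap U_l$, so the hypothesis $f\circ b_0=f\circ b_1$ forces $f$ to be constant along the chain. Equivalently, the assignment sending $y\in V$ to $f$ of the component of $U_k$ containing $y$ (any $k$ with $y\in U_k$) is well defined --- two choices of $k$ are compared through the component of $y$ in $U_k\cap U_l$ --- and locally constant, hence constant on the connected set $V$.

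It is also worth noting that the paper proves the lemma in two lines by using the criterion built into Definition \ref{cosh}: $\pi_0$ is a cosheaf if and only if $U\mapsto \hom(\pi_0(U),S)$ is a sheaf of sets for every $S$, and for locally connected $X$ this functor is exactly the sheaf of locally constant $S$-valued functions. Your direct verification is essentially an unwinding of that proof --- the chain/clopen step you need is precisely the gluing axiom for locally constant functions --- so once the step above is repaired your argument is correct, but the dual formulation does this bookkeeping for you.
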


\begin{proof} To prove this, we will show that $U\mapsto \hom(\pi_0(U),S)$ is a sheaf for all $S$. But whenever $X$ is locally connected, $\hom(\pi_0(U),S)$ is the set of all locally constant functions on $U$ (LCF(U) for short) with values in $S$. Hence $U\mapsto LCF(U)$ is clearly a sheaf, which implies the lemma.
\end{proof}

\begin{Pro} Let $X$ be a locally connected topological space and consider the constant functor $p$, where $p(U)=1$, the one point set for any open set $U$. Then we have $\hat{p}(U)=\pi_0(U)$ where $\hat{p}$ is the cosheafification of $p$.
\end{Pro}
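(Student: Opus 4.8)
The plan is to exhibit $\pi_0$ as the cosheafification of $p$ by checking the universal property directly. The previous lemma tells us $U\mapsto\pi_0(U)$ is a cosheaf, so it is a candidate. First I would produce the counit $\epsilon\colon(U\mapsto\pi_0(U))\to p$: this is forced, since $p(U)=1$ is terminal, so there is exactly one natural transformation from any functor to $p$, and in particular a unique $\epsilon$. This means $\pi_0$ is, so far, a cosheaf lying over $p$.

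Next I would verify the universal mapping property. Let $G'$ be any cosheaf (of sets) on $X$ together with the unique natural transformation $\varphi\colon G'\to p$ (again unique since $p$ is terminal). I must construct a unique natural transformation $\hat\varphi\colon G'\to(U\mapsto\pi_0(U))$. The key observation is that a natural transformation to $\pi_0$ is the same as a compatible family of maps $G'(U)\to\pi_0(U)$, and since $\pi_0(U)=\coprod_{C\in\pi_0(U)}1$ with each connected component $C$ itself open (here local connectedness is essential), the cosheaf condition for $G'$ applied to the covering of $U$ by its connected components gives $G'(U)\cong\coprod_{C\in\pi_0(U)}G'(C)$. I then need to know that $G'(C)$ is itself "connected" as a set in the appropriate sense — more precisely, that the canonical map $G'(C)\to\pi_0(C)=1$ is the unique map, which it is; but to get a well-defined map $G'(U)\to\pi_0(U)$ I use exactly this decomposition, sending the summand $G'(C)$ to the point $C\in\pi_0(U)$. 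Naturality in $U$ (with respect to inclusions) follows because the decomposition into connected components is functorial for open inclusions, and uniqueness follows because any natural transformation $G'\to\pi_0$ must be compatible with these decompositions and $\pi_0(C)$ is a one-point set, leaving no freedom.

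The step I expect to be the main obstacle is the rigorous identification $G'(U)\cong\coprod_{C\in\pi_0(U)}G'(C)$ from the cosheaf axiom, and more subtly, making sure the resulting assignment really is natural and that uniqueness genuinely holds. The delicate point is that the cosheaf condition is phrased via coequalizers of the Čech-type diagram, and when the covering is the (pairwise disjoint) cover by connected components, the double intersections $U_i\cap U_j$ are empty for $i\neq j$, so the coequalizer collapses to the coproduct $\coprod_C G'(C)$; one must check that empty intersections contribute $G'(\emptyset)$, and that $G'(\emptyset)$ is the initial object (empty set) — this follows from the cosheaf condition applied to the empty covering of $\emptyset$. Once that is in hand, the rest is formal, and the universal property is verified, so $\hat p(U)=\pi_0(U)$.
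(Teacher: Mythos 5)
Your proof is correct and follows essentially the same route as the paper: both verify the universal property of cosheafification via the open cover of $U$ by its connected components (using local connectedness), with the cosheaf condition for $G'$ on this cover producing the map $G'(U)\to\pi_0(U)$ and the triviality of $\pi_0$ on connected open sets forcing uniqueness. Your explicit treatment of the empty pairwise intersections and of $G'(\emptyset)=\emptyset$ (via the empty covering) spells out a point that the paper's map-of-coequalizer-diagrams argument passes over silently.
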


\begin{proof} The proof is done by the universality condition of the associated cosheaf. For this, we first need to construct a map $\alpha$ from $G$ to $\hat{p}$ for every cosheaf $G$ defined on $X$. Then we need to check that it is unique. To define $G(U)\rightarrow \hat{p}(U)$, consider the cover $\{U_i\}$ of $U$ by its connected components. Now it is clear that we have maps 
$$\xymatrix{\coprod_{i,j}G(U_i\cap U_j) \ar@<.5ex>[r]^{ \ \ \ \ \ b_0} \ar@<-.5ex>[r]_{ \ \ \ \ \ b_1}\ar[d] & \coprod_i G(U_i) \ar[r]^{ \ \ a}\ar[d] & G(U) \ar[r] & 1 \\ 
	          \coprod_{i,j}\pi_0(U_i\cap U_j) \ar@<.5ex>[r]^{ \ \ \ \ \ b_0} \ar@<-.5ex>[r]_{ \ \ \ \ \ b_1} & \coprod_i \pi_0(U_i) \ar[r]^{ \ \ a} & \pi_0(U) \ar[r] & 1 }$$ 
since the intersection of connected components is either itself or empty, and $\pi_0$ of a connected space is 1, the one point set. It follows from the exactness of the sequences that there is a unique map $G(U)\rightarrow \pi_0(U)$. Since every map in the one point set is unique, the middle map is unique, which implies the uniqueness of $\alpha$. This finishes the proof.
\end{proof}

\begin{Co} The terminal object of the category of cosheafs over a locally connected topological space $X$ is given by $U\mapsto \pi_0(U)$. 
\end{Co}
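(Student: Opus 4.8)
The plan is to deduce the Corollary formally from the Proposition together with the universal property of the associated cosheaf. First I would record that in the category of all functors $\mathfrak{Off}(X)\to{\bf Set}$ the constant functor $p$ with $p(U)=1$ is the terminal object: for every functor $G$ the only natural transformation $G\to p$ is the one whose components $G(U)\to 1$ are the unique maps to the one point set. So the problem is reduced to transporting this terminality along cosheafification.

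Next I would read the universal property stated after Definition \ref{cosh} as an adjunction. It says that for every cosheaf $G'$ and every natural transformation $\varphi\colon G'\to G$ there is a unique $\hat\varphi\colon G'\to\hat G$ with $\epsilon\circ\hat\varphi=\varphi$; this is exactly the statement that $(\hat G,\epsilon)$ is a coreflection of $G$, i.e. cosheafification is right adjoint to the inclusion of the full subcategory of cosheaves into all functors $\mathfrak{Off}(X)\to{\bf Set}$. Since right adjoints preserve limits, and in particular terminal objects, $\hat p$ is the terminal object of the category of cosheaves over $X$. By the Proposition, $\hat p(U)=\pi_0(U)$, which gives the Corollary.

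If one prefers to argue by hand rather than invoke the adjunction abstractly, the same computation is short: given any cosheaf $G'$, compose the (unique) map $G'\to p$ with $\epsilon\colon\hat p\to p$; the universal property produces a unique natural transformation $G'\to\hat p=(U\mapsto\pi_0(U))$ lifting it, and any natural transformation $G'\to(U\mapsto\pi_0(U))$ composed with $\epsilon$ recovers the unique map $G'\to p$, so by the uniqueness clause it must be this lift. Hence $U\mapsto\pi_0(U)$ receives exactly one map from every cosheaf. I do not expect any real obstacle here: the only point that needs care is to make sure the universal property in the definition of the associated cosheaf is genuinely a coreflection (so that the terminal functor maps to a terminal cosheaf), and to use the explicit identification $\hat p(U)=\pi_0(U)$ from the Proposition to replace the abstract coreflection of $p$ by the concrete functor $U\mapsto\pi_0(U)$.
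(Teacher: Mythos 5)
Your proposal is correct and is essentially the argument the paper intends (the Corollary is stated as an immediate consequence of the Proposition, precisely because the coreflection of the terminal functor $p$ is terminal among cosheaves). One small remark: the abstract "right adjoints preserve terminal objects" phrasing tacitly assumes cosheafification exists for \emph{all} functors, which the paper only guarantees under Vop\v{e}nka's principle, but your hand-written second argument needs only the coreflection of $p$ itself, which the Proposition supplies, so the proof stands as given.
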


\section{$\Pi_1$ and Costacks}
Recall that a \emph{groupoid} is a category where all morphisms are isomorphisms. A groupoid $\sf G$ is \emph{simply connected} \cite[p. 36]{gz} provided for any two objects $a$ and $b$ of $\sf G$, there is exactly one morphism $a\to b$. By abuse of notations we let $\Grp$ be the category, as well as the $2$-category of small groupoids. Thus for any functor $\F:I\to \Grp$ we can talk not only of limits and colimits of $\F$, but also of 2-limits and 2-colimits (see Section \ref{2lim}) of $\F$. 

\begin{De}\label{cost} Let $\G$ be a covariant functor from  the poset $\mathfrak{Off}(X)$ to $\Grp$. We say that $\G$ is a costack, if 
$$\xymatrix{\coprod\limits_{i,j,k}\G(U_i\cap U_j\cap U_k) \ar@<.7ex>[r]\ar[r]\ar@<-.7ex>[r]&\coprod\limits_{i,j}\G(U_i\cap U_j) \ar@<.5ex>[r]\ar@<-.5ex>[r] & \coprod\limits_i \G(U_i) \ar[r] & \G(U) \ar[r] & 0 }$$
is $2$-exact. That is to say, $\G(U)$ is the $2$-colimit of the diagram 
$$\xymatrix{\coprod\limits_{i,j,k}\G(U_i\cap U_j\cap U_k) \ar@<.7ex>[r]\ar[r]\ar@<-.7ex>[r]&\coprod\limits_{i,j}\G(U_i\cap U_j) \ar@<.5ex>[r]\ar@<-.5ex>[r] & \coprod\limits_i \G(U_i)  &  &  }.$$
Equivalently, 
$\G$ is a costack iff  for any groupoid ${\sf G}$ the assignment $$U\mapsto \hom_{\Grp}(\G(U),{\sf G})$$ defines a stack \cite{br}.
\end{De}

Hence we can again talk about costackification, or associated costacks of functors, if they exist. Probably an analogue of Theorem \ref{vo} is still true for $2$-categories and hence under some set theoretical assumptions the costackification does exist, but we do not go in this direction and we restrict ourselves  only to the existence of 2-terminal objects. Recall that an object $t$ of a $2$-category $\bf A$ is  \emph{$2$-terminal}, provided for any object $x$ of $\bf A$ the category $\hom_{\bf A}(x,t)$ is a simply connected groupoid, which is the same to say that $\hom_{\bf A}(x,t)$ is equivalent to the groupoid $\bf 1$, with one object and one arrow.

For a topological space $X$, we let $\Pi_1(X)$ be the fundamental groupoid of $X$, see \cite{Brown} . Recall that the objects of $\Pi_1(X)$ are points of $X$, while morphisms are homotopy classes of paths. Then we have the following result.

\begin{Th}\label{Pi1} Let $X$ be a  topological space. Then the association $U\mapsto \Pi_1(U)$ defines simultaneously a cosheaf and costack with values in $\Grp$. 
\end{Th}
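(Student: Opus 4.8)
The plan is to prove the statement in two halves. First, the cosheaf part: by Definition \ref{cosh}, it suffices to show that $U\mapsto \hom_{\Grp}(\Pi_1(U),{\sf G})$ is a sheaf of sets for every groupoid ${\sf G}$, but actually the cleaner route is to recall the classical fact that the fundamental groupoid functor sends open covers to colimits. Concretely, for an open cover $U=\bigcup_i U_i$, the van Kampen theorem for the fundamental groupoid (in R. Brown's formulation) states precisely that $\Pi_1(U)$ is the coequaliser of $\coprod_{i,j}\Pi_1(U_i\cap U_j)\rightrightarrows\coprod_i\Pi_1(U_i)$, provided the cover is suitably nice. I would either cite Brown's version directly or, to keep things self-contained, deduce it from the groupoid van Kampen theorem applied to the open cover and its pairwise intersections. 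This immediately gives the cosheaf statement and hence, a fortiori, that $U\mapsto\pi_0(U)=\pi_0(\Pi_1(U))$ is a cosheaf too.

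For the costack part, I would unwind Definition \ref{cost}: one must show that $\Pi_1(U)$ is the $2$-colimit of the truncated simplicial-type diagram built from the $U_i$, their pairwise intersections $U_i\cap U_j$, and triple intersections $U_i\cap U_j\cap U_k$. The key observation is that an ordinary colimit which happens to be computed from a diagram of groupoids along maps that are already "cofibration-like" (i.e. the diagram is $2$-categorically well-behaved) agrees with the $2$-colimit; more precisely, since a $1$-colimit always maps to the $2$-colimit, it suffices to check that this comparison functor is an equivalence of groupoids. I would argue that the van Kampen theorem, properly stated, already produces the $2$-colimit: Brown's gluing result identifies $\Pi_1(U)$ with the pushout-style construction in $\Grp$, and because every morphism in the indexing diagram is an inclusion-induced functor that is injective on objects and faithful, the strict colimit and the homotopy ($2$-) colimit coincide. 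The triple intersections enter exactly to witness the coherence $2$-cells, so the $2$-exactness of the three-term diagram is the correct enrichment of the coequaliser statement.

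The main obstacle, and the step I expect to require the most care, is the passage from the $1$-categorical van Kampen coequaliser to the genuine $2$-colimit, i.e. verifying that no information is lost at the level of the automorphisms/morphism coherence. One has to check that a cocone under the diagram of groupoids into an arbitrary target groupoid ${\sf G}$, together with the prescribed coherence isomorphisms over the double intersections satisfying the cocycle condition over the triple intersections, is induced by an essentially unique functor out of $\Pi_1(U)$; uniqueness here is uniqueness up to a unique natural isomorphism, which is precisely where the triple-intersection term is used. Equivalently, via the stack reformulation at the end of Definition \ref{cost}, this is the statement that $U\mapsto\hom_{\Grp}(\Pi_1(U),{\sf G})$ satisfies effective descent for groupoid-valued data, which is the $2$-dimensional analogue of the sheaf condition used in the cosheaf half. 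I would handle this by descent: given local functors $f_i\colon\Pi_1(U_i)\to{\sf G}$ and coherence isomorphisms on overlaps, reconstruct the global functor on objects pointwise (using that a point of $U$ lies in some $U_i$) and on morphisms by subdividing a path along the cover, using the $1$-categorical van Kampen result to see the construction is well-defined and the coherence data to see it is independent of choices.

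Finally, I would remark that once $\Pi_1$ is shown to be a costack, terminality among costacks is immediate by the same universality argument as in the $\pi_0$ case: starting from the constant functor at the terminal groupoid ${\bf 1}$, any costack $\G$ maps canonically to $\Pi_1$ via the cover of $U$ by its path-components (or, more carefully, by connected open pieces), and $2$-exactness forces this functor to be essentially unique, yielding that $U\mapsto\Pi_1(U)$ is the $2$-terminal object in the $2$-category of costacks on $X$. I expect this last part to be routine given Theorem \ref{Pi1} itself and the explicit description of $2$-terminal objects recalled above.
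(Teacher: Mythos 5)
There is a genuine gap at the heart of the costack half. Your argument rests on the assertion that the strict colimit and the $2$-colimit of the covering diagram agree ``because every morphism in the indexing diagram is an inclusion-induced functor that is injective on objects and faithful.'' This is exactly the crux of the theorem, and you neither prove it nor state a correct hypothesis for it: the functors $\coprod_{i,j}\Pi_1(U_i\cap U_j)\to\coprod_i\Pi_1(U_i)$ are \emph{not} injective on objects (a point lying in $U_i\cap U_j\cap U_k$ occurs in several components $(i,j)$, $(i,k)$ of the source that hit the same point of the $i$-th component), and inclusion-induced functors on fundamental groupoids are in general not faithful (a loop in $U_i\cap U_j$ may die in $U_i$). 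Moreover, no general principle of the form ``object-injective maps in the diagram force $\mathrm{colim}\simeq 2\mathrm{colim}$'' is available off the shelf; the paper only establishes such a comparison in two special shapes --- pushouts along one object-injective leg (via the deformation Lemma \ref{deformation}, giving Proposition \ref{2}) and filtered diagrams (Proposition \ref{filtered}) --- and then reduces the general covering to these by purely formal steps: Lemma \ref{finite} bootstraps the two-member case to all finite covers (gluing $U_1\cup U_2$ and inducting), and Lemma \ref{general} passes to arbitrary covers using that $\Pi_1$ commutes with filtered colimits. Your proposal contains no substitute for this reduction, and the alternative you sketch (reconstructing a functor out of $\Pi_1(U)$ by subdividing paths and checking independence of choices via the cocycle condition) is precisely the hard verification --- well-definedness on homotopy classes requires subdividing homotopies of paths and checking coherence on the triple overlaps --- which you flag but do not carry out.

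The cosheaf half has a milder but related problem: you propose to ``cite Brown's version directly'' for arbitrary open covers, but the references available support only restricted versions (May assumes the cover consists of connected sets; Brown's Statement 6.7.2 treats two-member covers), so the coequaliser statement for a general cover is part of what must be proved, not a citation. The paper gets it from the two-set Van Kampen Theorem \ref{vk} by the same finite-induction plus filtered-colimit argument as above. So while your overall strategy (two-set Van Kampen, then a comparison of $1$- and $2$-colimits, phrased if you like as effective descent for $\hom_{\Grp}(\Pi_1(-),{\sf G})$) is the right shape, the proposal as written asserts rather than proves the decisive comparison step, and the justification offered for it is incorrect. The closing paragraph on terminality is not part of this theorem and would in any case need the ``good space'' hypothesis of the following theorem.
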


The statement on cosheaves is basically the classical Van-Kampen theorem formulated in terms of fundamental groupoids, compare with \cite[Theorem II.7] {May}, where the author considers only coverings with connected open sets.  The last condition is unnecessary,  as it was demonstrated in \cite[p. 226, Statement 6.7.2]{Brown} where the author considers only coverings with two members. The statement about costacks is the main part of the paper and  the proof will be given in Section \ref{pr22}.

This already demonstrates the importance of costacks, but indeed more is true. Call a topological space $X$ \emph{good} if any open subset $U$ of $X$ possesses an open covering $U=\bigcup_iU_i$ such that all $U_i$ are simply connected, as well as all nonempty intersections $U_i\cap U_j$, $U_i\cap U_j\cap U_k$.

\begin{Th} Let $X$ be a good topological space. Consider the constant functor $P:U\mapsto {\bf 1}$, where ${\bf 1}$ is the one point groupoid (with only one object and one morphism). Then $P$ has an associated costack, which is the fundamental groupoid, i.e. $\hat{P}(U)=\Pi_1(U)$. Thus the costack  given by $U\mapsto \Pi_1(U)$ is a $2$-terminal object in the 2-category of all costacks on $X$.
\end{Th}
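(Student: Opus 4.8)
The plan is to deduce this theorem from Theorem \ref{Pi1} together with the universal property of the costackification, using the ``goodness'' hypothesis to compare $\hat P$ with $\Pi_1$. First I would record the formal setup: since $P$ is the constant functor with value $\bf 1$ and $\bf 1$ is $2$-terminal in $\Grp$, there is a canonical natural transformation $\Pi_1 \to P$ (the unique one, up to canonical $2$-isomorphism). By Theorem \ref{Pi1}, $\Pi_1$ is a costack, so the universal property of the costackification $\epsilon\colon\hat P\to P$ yields a comparison functor (natural transformation of functors on $\mathfrak{Off}(X)$) $\Psi\colon \Pi_1\to\hat P$ over $P$, unique up to the appropriate $2$-cell. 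The goal is then to show $\Psi_U\colon\Pi_1(U)\to\hat P(U)$ is an equivalence of groupoids for every open $U$.

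The key step is a local computation of $\hat P$. Fix an open set $U$ and a ``good'' cover $U=\bigcup_i U_i$ as in the definition, so that all $U_i$ and all nonempty double and triple intersections are simply connected, i.e. each is sent by $P$ (hence also by $\hat P$, since $\hat P\to P$ restricted to a simply connected open set should be an equivalence — this needs a small argument, see below) to a groupoid equivalent to $\bf 1$. Since $\hat P$ is a costack, $\hat P(U)$ is the $2$-colimit of the diagram
$$\xymatrix{\coprod\limits_{i,j,k}\hat P(U_i\cap U_j\cap U_k) \ar@<.7ex>[r]\ar[r]\ar@<-.7ex>[r]&\coprod\limits_{i,j}\hat P(U_i\cap U_j) \ar@<.5ex>[r]\ar@<-.5ex>[r] & \coprod\limits_i \hat P(U_i)  .}$$
Replacing each term by its equivalent $\coprod\mathbf 1$ (one copy per nonempty intersection), this $2$-colimit is the $2$-colimit of the same Čech-type diagram built from the constant functor $\bf 1$; by the same reasoning $\Pi_1(U)$ — being a costack by Theorem \ref{Pi1} — is the $2$-colimit of the analogous diagram with $\Pi_1$ in place of $\hat P$, and on simply connected pieces $\Pi_1$ is also equivalent to $\bf 1$. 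Hence both $\hat P(U)$ and $\Pi_1(U)$ are canonically equivalent to one and the same groupoid, namely the $2$-colimit of the ``nerve-of-the-cover'' diagram of copies of $\bf 1$ (which is just the fundamental groupoid of the nerve, a simply-connected-components invariant of the covering). One then checks that under these identifications the comparison $\Psi_U$ becomes the identity, so $\Psi_U$ is an equivalence. Finally, since $\hat P$ is a costack equivalent as a functor to the costack $\Pi_1$, it is the costackification, and being the costackification of the terminal constant functor $P$ it is a $2$-terminal object of the $2$-category of costacks on $X$.

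The main obstacle I anticipate is the bookkeeping of $2$-colimits: one must verify that $2$-colimits of $\Grp$-valued diagrams are invariant under levelwise equivalences of diagrams (so that replacing $\hat P(U_i\cap U_j\cap\cdots)$ by $\bf 1$ is legitimate and natural), and that the resulting equivalences are compatible with the structure maps of the two Čech diagrams, so that the induced equivalences $\hat P(U)\simeq(\text{nerve groupoid})\simeq\Pi_1(U)$ genuinely agree with $\Psi_U$ rather than merely existing abstractly. This is where the coherence of pseudo-natural transformations enters, and care is needed because the cover is not canonical — different good covers must give the same answer, which follows from the uniqueness clause in the universal property of $\hat P$ but should be spelled out. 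A secondary point to nail down is the claim that for a simply connected open set $V$ the map $\epsilon_V\colon\hat P(V)\to P(V)=\mathbf 1$ is an equivalence; this can be obtained by applying the costack condition to the trivial cover $\{V\}$ of $V$, or by observing that $\Pi_1(V)\simeq\mathbf 1$ and invoking the comparison $\Psi_V$ together with the fact established for general $U$. Everything else is a routine unwinding of Theorem \ref{Pi1}, Definition \ref{cost}, and the definition of the associated costack.
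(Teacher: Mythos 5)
There is a genuine gap, and it comes in two parts. First, your whole argument presupposes that the costackification $\epsilon\colon \hat P\to P$ exists and then tries to show that the comparison $\Psi\colon\Pi_1\to\hat P$ is a levelwise equivalence. But the existence of $\hat P$ is part of the assertion of the theorem: the paper deliberately does \emph{not} assume that costackification exists in general (that would require an unproved $2$-categorical analogue of Theorem \ref{vo}, i.e.\ set-theoretic hypotheses such as Vop\v{e}nka's principle). The intended proof goes the other way: one shows directly that the costack $\Pi_1$ (Theorem \ref{Pi1}) satisfies the universal property of the associated costack of $P$, equivalently that it is $2$-terminal among costacks. Concretely, for an arbitrary costack $Q$ and any open $U$ one chooses a good cover of $U$, notes that $\Pi_1$ is equivalent to $\bf 1$ on all pieces and nonempty double and triple intersections, so there are essentially unique functors $Q(U_{i})\to\Pi_1(U_i)$, $Q(U_{ij})\to\Pi_1(U_{ij})$, $Q(U_{ijk})\to\Pi_1(U_{ijk})$, and then uses the $2$-exactness of the \v{C}ech diagram for $Q$ to produce an essentially unique $\mathfrak q(U)\colon Q(U)\to\Pi_1(U)$; compatibility over $P$ is automatic because $P$ is trivial. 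This both constructs $\hat P$ and identifies it with $\Pi_1$ in one stroke, with no appeal to a pre-existing $\hat P$.

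Second, even granting the existence of $\hat P$, the pivotal step of your computation --- that $\hat P(V)\simeq{\bf 1}$ for every simply connected open $V$ --- is not justified by either of the arguments you sketch. Applying the costack condition to the trivial cover $\{V\}$ of $V$ is vacuous: it only says $\hat P(V)$ is the $2$-colimit of a diagram all of whose terms are $\hat P(V)$. And invoking $\Pi_1(V)\simeq{\bf 1}$ together with ``the fact established for general $U$'' is circular, since that fact is exactly what you are trying to prove. Note that the maps $\Pi_1(V)\to\hat P(V)\to P(V)={\bf 1}$ only show that $\hat P(V)$ is nonempty, not that it is equivalent to $\bf 1$; the only access you have to $\hat P$ is its universal property among costacks mapping to $P$, and exploiting that property leads you back to constructing, for each costack $Q$, an essentially unique map $Q\to\Pi_1$ --- i.e.\ to the paper's argument. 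The remaining ingredient you flag (invariance of $2$-colimits under levelwise equivalences coherent with the transition functors) is unproblematic here precisely because $\bf 1$ is $2$-terminal in $\Grp$, but it cannot repair the two gaps above.
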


\begin{proof} By Theorem \ref{Pi1} $U\mapsto \Pi_1(U)$ is a costack. To prove that it is the costackification of $P$, we will use the universal property. It is clear that we have a morphism $\mathfrak{p}:\Pi_1\rightarrow P$ of 2-functors, so we only have to prove that given another 2-functor $Q:U\rightarrow {\Grp}$, there exists a morphism $\mathfrak{q}:\Pi_1\rightarrow Q$, where $Q$ is a costack. The fact that the compatibility conditions will be satisfied comes from the triviality of $P$. 

Hence we only have to construct $\mathfrak{q}$. To define $\mathfrak{q}(U)$, cover $U$ by open subsets $\{U_i\}_{i\in I}$ such that all $U_i$ are simply connected  and the same holds for all their nonempty pairwise and triple intersections. We can do this by the assumption on $X$. Since $Q$ is a costack

$$\xymatrix{\coprod\limits_{i,j,k}Q(U_i\cap U_j\cap U_k) \ar@<.7ex>[r]\ar[r]\ar@<-.7ex>[r]&\coprod\limits_{i,j}Q(U_i\cap U_j) \ar@<.5ex>[r]\ar@<-.5ex>[r] & \coprod\limits_i Q(U_i) \ar[r] & Q(U) \ar[r] & 1 }$$
is $2$-exact. Likewise for $\Pi_1$. Note that on the empty set $Q$ is empty, just like $\Pi_1$. For $U_i$-s and their non-empty intersections $U_i\cap U_j$ and $U_i\cap U_j\cap U_k$, $\Pi_1$ is the trivial groupoid, since they are simply connected. It follows that there is essentially a unique functor from $Q(U_i), Q(U_i\cap U_j)$ and $Q(U_i\cap U_j\cap U_k)$ to $\Pi_1(U_i), \Pi_1(U_i\cap U_j)$ and $\Pi_1(I_i\cap U_j\cap U_k)$ respectively.

Hence by $2$-exactness we get a map, unique up to natural transformation $\q(U):Q(U)\rightarrow \Pi_1(U)$ satisfying the compatibility condition. This finishes the proof.
\end{proof}

\section{Limits, colimits, $2$-limits and 2-colimts }\label{2lim}
First let us fix some notations. For functors and natural transformations
$$\xymatrix{{\sf A}\ar[r]^F&{\sf B}\ar@<.5ex>[r]^{ \ \ \ \ \ G_1} \ar@<-.5ex>[r]_{ \ \ \ \ \ G_2} & {\sf C}\ar[r]^T &{\sf D}}, \ \  \alpha:G_1 \Longrightarrow G_2. $$
one denotes by $\alpha \star F$ and  $T\star \alpha$ the
induced natural transformations $G_1F\Longrightarrow G_2F$, $TG_1\Longrightarrow TG_2$.

Let $I$ be a poset and let $\F:I^{op}\to \Grp$ be  a contravariant functor from the category $I$ to the category of groupoids. For an element $i\in I$ we let $\F_i$ be the value of $\F$ at $i$. For $i\leq j$ we let $\psi_{ij}:\F_j\to \F_i$ be the induced functor. Recall the construction of two groupoids $\lim\limits_{i} \F_i$ and $\tl\limits_i\F_i$, called respectively \emph{limit} and \emph{$2$-limit} of $\F$. 

Objects of the groupoid $\lim\limits_i \F_i$ (or simply $\lim \F$) are  families $(x_i)$, where $x_i$ is an object of the category $\F_i$ such that for any $i\leq j$ one has $\psi_{ij}(x_j)=x_i$. A morphism $(x_i)\to(y_i)$ is  a family $(f_i)$, where $f_i:x_i\to y_i$ is a morphism of $\F_i$ such that for any $i\leq j$ one has $\psi_{ij}(f_j)=f_i$.
 
 Objects of the groupoid $\tl\limits_i\F_i$ (or simply $\tl \F$) are  collections $(x_i,\xi_{ij})$, where $x_i$ is an object of $\F_i$, while  $\xi_{ij}:\psi_{ij}(x_j)\to x_i$ for  $i\leq j$ is an isomorphism of the category $\F_j$ satisfying the 1-cocycle condition: For any $i\leq j\leq k$ one has
 $$\xi_{ik}=\xi_{ij} \circ \psi_{ij}(\xi_{kj}).$$
A morphism from $(x_i,\xi_{ij})$ to $(y_i,\eta_{ij})$ is  a collection $(f_i)$, where $f_i:x_i\to y_i$ is a morphism of $\F_i$ such that for any $i\leq j$ the following 
$$\xymatrix{\psi_{ij}(x_j)\ar[r]^{\xi_{ij}}\ar[d]_{\psi_{ij}(f_j)}& x_i\ar[d]^{f_i}\\
\psi_{ij}(y_j)\ar[r]_{\eta_{ij}}& y_i}$$
 is a commutative diagram.  Let us state the following obvious Lemma.
 \begin{Le}\label{gamma} The functor $$\gamma:\lim \F\to \tl \F,$$ given by 
 $$\gamma(x_i)=(x_i,Id_{x_i})$$
 is  full and faithful.
 \end{Le}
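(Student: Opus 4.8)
The plan is to verify the two assertions---faithfulness and fullness---by a direct unwinding of the definitions of $\lim \F$ and $\tl \F$ recalled above; no genuine computation is involved. A preliminary remark is that $\gamma$ is well defined: since each transition functor $\psi_{ij}$ preserves identities and $\psi_{ij}(x_j) = x_i$ for an object $(x_i)$ of $\lim \F$, we have $\psi_{ij}(Id_{x_j}) = Id_{\psi_{ij}(x_j)} = Id_{x_i}$, so the collection $(x_i, Id_{x_i})$ satisfies the $1$-cocycle condition trivially and is a genuine object of $\tl \F$; similarly, for a compatible family $(f_i)$ the square defining a morphism of $\tl \F$ becomes $Id_{y_i} \circ \psi_{ij}(f_j) = f_i \circ Id_{x_i}$, which holds.

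For faithfulness, the point is that on morphisms $\gamma$ is literally the identity assignment: a morphism $(x_i) \to (y_i)$ of $\lim \F$ is by definition a compatible family $(f_i)$, and $\gamma$ carries it to the same family $(f_i)$, now regarded as a morphism $(x_i, Id) \to (y_i, Id)$ of $\tl \F$. Hence two morphisms of $\lim \F$ with equal images under $\gamma$ are the same family, and $\gamma$ is faithful.

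For fullness I would take an arbitrary morphism $(f_i) \colon \gamma(x_i) \to \gamma(y_i)$ of $\tl \F$. By definition this is a family $f_i \colon x_i \to y_i$ making the defining square commute; since its horizontal edges are $Id_{x_i}$ and $Id_{y_i}$, commutativity is exactly the equation $\psi_{ij}(f_j) = f_i$ for all $i \leq j$. This is precisely the compatibility condition for $(f_i)$ to be a morphism $(x_i) \to (y_i)$ of $\lim \F$, and then $\gamma$ applied to this morphism is the one we started with. Hence $\gamma$ is full.

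I do not anticipate any real obstacle here, the argument being purely formal; the only points that call for (minor) care are the orientation of the isomorphisms $\xi_{ij}$ and the order of composition in the cocycle identity and in the commuting square. It is worth emphasising that $\gamma$ is in general far from essentially surjective, since an object of $\tl \F$ carries the extra datum of the gluing isomorphisms $\xi_{ij}$---this being exactly what distinguishes the $2$-limit from the ordinary limit.
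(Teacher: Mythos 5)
Your verification is correct and is exactly the routine unwinding of the definitions that the paper has in mind when it states the lemma as "obvious" without proof. Nothing to add.
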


 Dually, for  a functor $\G:I\to \Grp$ one can define \emph{colimits} and \emph{$2$-colimits} of $\G$ as follows. $\colim\limits_i \G_i$ is a groupoid together with a collection of functors 
$$\alpha_i:\G_i\to \colim\limits_i \G_i$$
 such that $\alpha_i=\alpha_j\psi_{ij}$ for any $i\leq j$. Additionally, one requires that for any groupoid ${\sf G}$ the canonical functor
$$c:\hom_{\Grp}(\colim\limits_i\G_i,{\sf G})\to \lim\limits_i\hom_{\Grp}(\G_i,{\sf G})$$
is an isomorphism of groupoids. Here the functor $c$ is given by $c(\chi)=(\chi\circ  \alpha_i)$,  where $\chi:\colim\limits_i\G_i\to \sf G$ is a functor. It is well-known that such $\colim\limits_i \G_i$ exists and it is unique up to isomorphism of groupoids \cite[p. 36]{gz}.

Quite similarly, $\tc\limits_i \G_i$ is a groupoid together with a family of functors 
 $$\alpha_i:\G_i\to \tc\limits_i \G_i$$ 
 and natural transformations $\lambda_{ij}:\alpha_j\psi_{ij}\Longrightarrow \alpha_i$, $i\leq j$ 
 satisfying  the $1$-cocycle condition: For any $i\leq j\leq k$, 
 $$\lambda_{ik}=\lambda_{ij}\circ (\lambda_{jk}\star\psi_{ij}).$$
Furthermore, one requires that for any groupoid ${\sf G}$, the canonical functor
$$\kappa:\hom_{\Grp}(\tc\limits_i\G_i,{\sf G})\to \tl\limits_i\hom_{\Grp}(\G_i,{\sf G})$$
is an equivalence of groupoids. Here the functor $\kappa$ is given by $\kappa(\chi)=(\chi\circ  \alpha_i, \chi_i\star \lambda_{ij})$. It is well-known that $\tc$ exists and is unique up to equivalence of groupoids, see, \cite[pp. 192-193] {br}. 

The functors $c, \kappa$  and the comparison functor $\gamma$ in Lemma \ref{gamma} allow us to consider the composite functor:
$$\hom_{\Grp}(\colim\limits_i\G_i,{\sf G})\xto{c} \lim\limits_i\hom_{\Grp}(\G_i,{\sf G})\xto{\gamma} \tl\limits_i\hom_{\Grp}(\G_i,{\sf G}) \xto{\kappa^{-1}} \hom_{\Grp}(\tc\limits_i\G_i,{\sf G})$$
By the Yoneda lemma one obtains a  comparison functor
$$\delta:\tc\limits_i\G_i\to \colim\limits_i\G_i.$$
\begin{Le}\label{delta} The functor $\delta$ is an equivalence of categories iff for any groupoid ${\sf D}$, and any family of functors 
 $$\beta_i:\G_i\to {\sf D}$$ 
 and natural transformations $\tau_{ij}:\beta_j\psi_{ij}\Longrightarrow \beta_i$, $i\leq j$ 
 satisfying  the $1$-cocycle condition [i.e. for any $i\leq j\leq k$, 
 $$\tau_{ik}=\tau_{ij}\circ (\tau_{jk}\star\psi_{ij})],$$
 there exists another family of functors $\beta_i':\G_i\to {\sf D}$
 and natural transformations $\mu_i:\beta_i\Longrightarrow \beta'_i$ such that
 $\beta_i'=\beta'_j\circ \psi_{ij}$,  and $\tau_{ij}=\mu_i^{-1}\circ (\mu_j\star \psi_{ij})$ for all $i\leq j$.
\end{Le}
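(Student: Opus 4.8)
The plan is to push the statement through the chain of comparison functors $c$, $\gamma$, $\kappa$ introduced in Section \ref{2lim} until it becomes a question about the essential image of the formal functor $\gamma$ of Lemma \ref{gamma}, and then to read off the descent condition by unwinding the definitions of $\lim$ and $\tl$.

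The first step is a representability reduction. A functor $\delta:\tc\limits_i\G_i\to\colim\limits_i\G_i$ between groupoids is an equivalence of categories if and only if, for every groupoid ${\sf G}$, precomposition with $\delta$ induces an equivalence of groupoids $\delta^{*}:\hom_{\Grp}(\colim\limits_i\G_i,{\sf G})\to\hom_{\Grp}(\tc\limits_i\G_i,{\sf G})$. One direction is clear; for the other one runs the standard Yoneda argument: taking ${\sf G}=\tc\limits_i\G_i$ produces a functor $r$ with $r\delta\cong Id$, and taking ${\sf G}=\colim\limits_i\G_i$, where $\delta^{*}$ is in particular full and faithful, gives $\delta r\cong Id$ from $\delta^{*}(\delta r)=\delta r\delta\cong\delta=\delta^{*}(Id)$. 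By construction $\delta$ is the functor produced by the Yoneda lemma from the natural transformation whose component at ${\sf G}$ is $\kappa^{-1}\circ\gamma\circ c$, so $\delta^{*}=\kappa^{-1}\circ\gamma\circ c$. Since the text records that $c$ is an isomorphism and $\kappa$ an equivalence for every ${\sf G}$, the functor $\delta^{*}$ is an equivalence for every ${\sf G}$ if and only if $\gamma:\lim\limits_i\hom_{\Grp}(\G_i,{\sf G})\to\tl\limits_i\hom_{\Grp}(\G_i,{\sf G})$ is an equivalence for every ${\sf G}$; and by Lemma \ref{gamma} this $\gamma$ is always full and faithful, so the only content is its essential surjectivity.

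It remains to unwind what essential surjectivity of $\gamma$ means, which is bookkeeping. Fix the test groupoid ${\sf D}$ and put $\F_i=\hom_{\Grp}(\G_i,{\sf D})$; since $\G$ is covariant, $i\mapsto\F_i$ is a contravariant functor on $I$ whose transition functor for $i\le j$ is precomposition with $\psi_{ij}:\G_i\to\G_j$, so it is exactly of the type considered at the start of Section \ref{2lim}. Unravelling the definitions there: an object of $\tl\limits_i\F_i$ is precisely a family of functors $\beta_i:\G_i\to{\sf D}$ together with natural isomorphisms $\tau_{ij}:\beta_j\psi_{ij}\Rightarrow\beta_i$ for $i\le j$ obeying the $1$-cocycle identity; an object in the image of $\gamma$ is one of the form $(\beta_i',Id)$ with $(\beta_i')\in\lim\limits_i\F_i$, that is $\beta_j'\psi_{ij}=\beta_i'$; and a morphism $(\beta_i,\tau_{ij})\to(\beta_i',Id)$ in $\tl\limits_i\F_i$ is exactly a family of natural transformations $\mu_i:\beta_i\Rightarrow\beta_i'$ with $\mu_j\star\psi_{ij}=\mu_i\circ\tau_{ij}$, equivalently $\tau_{ij}=\mu_i^{-1}\circ(\mu_j\star\psi_{ij})$ (each $\mu_i$ is automatically invertible in the groupoid $\F_i$, so such a morphism is automatically an isomorphism). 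Hence $\gamma$ is essentially surjective for every ${\sf D}$ exactly when the condition in the statement holds, and combining this with the previous paragraph proves the lemma.

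The only genuinely delicate point is the first step: one must check with care that $\delta^{*}$ agrees, up to natural isomorphism, with $\kappa^{-1}\circ\gamma\circ c$ — bearing in mind that $\kappa^{-1}$ is only a chosen pseudo-inverse of the equivalence $\kappa$, so that one is really invoking a bicategorical version of the Yoneda lemma — and that this still suffices for the representability argument. Everything after that is a mechanical transcription of the constructions of Section \ref{2lim}.
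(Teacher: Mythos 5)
Your argument is correct and is essentially the paper's own proof: both reduce, via the $2$-categorical Yoneda lemma, to asking that $\hom_{\Grp}(\delta,{\sf D})$ be an equivalence for every $\sf D$, use Lemma \ref{gamma} to see this functor is always full and faithful, and then translate essential surjectivity of $\gamma$ on the groupoids $\hom_{\Grp}(\G_i,{\sf D})$ into the stated descent condition. The only difference is that you spell out details the paper leaves implicit (the representability argument behind the Yoneda step, the identification of $\hom_{\Grp}(\delta,{\sf D})$ with $\kappa^{-1}\circ\gamma\circ c$ and the care needed because $\kappa$ is only an equivalence, and the explicit unwinding of objects and morphisms of $\tl$), which is a welcome tightening rather than a new route.
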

\begin{proof} By the Yoneda lemma for $2$-categories $\delta$ is an equivalence of categories iff for any groupoid ${\sf D}$ the induced functor 
$$\hom_{\Grp}(\delta, {\sf D}):\hom_{\Grp}(\colim \G,{\sf D})\to \hom_{\Grp}(\tc \G,{\sf D})$$ is an equivalence of categories. Since
$$\hom_{\Grp}(\colim\limits_i  \G,{\sf D}) =\lim\limits_i \hom_{\Grp}(\G_i,{\sf D})$$ 
and 
$$\hom_{\Grp}(\tc\limits_i  \G,{\sf D}) =\tl\limits_i \hom_{\Grp}(\G_i,{\sf D}),$$ 
we can use Lemma \ref{gamma} to conclude that the functor $\hom_{\Grp}(\delta, {\sf D})$ is always full and faithful. Hence it is an equivalence iff it is essentially surjective on objects. One easily sees that the last condition is exactly the assertion made in the statement and we are done.

\end{proof}
As we will see in the next section, the functor $\delta$ is often an equivalence of categories, which explains the fact that $U\mapsto \Pi_1(U)$ is simultaneously a cosheaf and a costack.

\section{When $\delta:\tc\to \colim$ is an equivalence of categories}
Recall that a poset $I$ is called \emph{filtered} provided for any elements $i,j\in I$ there is an element $k\in I$ such that $i\leq k$ and $j\leq k$.

\begin{Pro}\label{filtered} If $I$ is  a filtered poset, then for any functor $\G:I\to \Grp$ the canonical comparison functor $\delta:\tc\limits_i\G_i\to \colim\limits_i\G_i $ is an equivalence of categories.
\end{Pro}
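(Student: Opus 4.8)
The plan is to verify the criterion of Lemma~\ref{delta} directly, exploiting the fact that in a filtered poset every finite family of indices is dominated by a single one. So suppose we are given a groupoid $\sf D$, functors $\beta_i:\G_i\to {\sf D}$, and natural transformations $\tau_{ij}:\beta_j\psi_{ij}\Longrightarrow \beta_i$ for $i\leq j$ satisfying the 1-cocycle condition. I want to produce functors $\beta_i':\G_i\to {\sf D}$ with $\beta_i'=\beta_j'\circ\psi_{ij}$ and natural isomorphisms $\mu_i:\beta_i\Longrightarrow\beta_i'$ with $\tau_{ij}=\mu_i^{-1}\circ(\mu_j\star\psi_{ij})$. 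The natural guess, by analogy with the classical fact that a filtered colimit of groupoids coincides with the na\"ive colimit, is that the $1$-cocycle $\tau$ is always a coboundary.

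The key steps, in order: First, pick (by a Zorn's-lemma argument or by well-ordering $I$ and using transfinite recursion) a coherent system of ``base points'' — more precisely, I would like to choose for each $i$ an element $m(i)\geq i$ together with isomorphisms $\mu_i := $ (a composite built from the $\tau$'s) that trivialise $\tau$. Concretely, if $I$ had a maximum element $M$, I could just set $\beta_i' := \beta_M\circ\psi_{iM}$ and $\mu_i := \tau_{iM}$; the cocycle condition on $\tau$ would then give exactly $\tau_{ij}=\mu_i^{-1}\circ(\mu_j\star\psi_{ij})$, because $\tau_{iM}=\tau_{ij}\circ(\tau_{jM}\star\psi_{ij})$, i.e. $\mu_i=\tau_{ij}\circ(\mu_j\star\psi_{ij})$, which rearranges to the desired identity. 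Second, since a general filtered $I$ has no maximum, I would replace ``the object $M$'' by ``a cofinal directed system'' and take a colimit of the $\beta_M$'s along it — but cleaner is to observe that it suffices to check essential surjectivity of $\hom_{\Grp}(\delta,{\sf D})$ on objects (Lemma~\ref{delta} already reduced us to this), and an object of $\lim_i\hom(\G_i,{\sf D})$ that lies in the essential image is precisely a strictly compatible family, whereas an arbitrary object of $\tl_i\hom(\G_i,{\sf D})$ is a family-with-cocycle; so I must show every such cocycle family is isomorphic, in $\tl_i\hom(\G_i,{\sf D})$, to one coming from $\lim$. Third, carry out the trivialisation: well-order $I$, and define $\beta_i'$ and $\mu_i$ by transfinite recursion so that for $i\leq j$ one has $\beta_i'=\beta_j'\psi_{ij}$ — at successor and limit stages one uses filteredness to find common upper bounds and the $1$-cocycle condition to guarantee the choices glue. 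Finally, check the two required identities $\beta_i'=\beta_j'\circ\psi_{ij}$ and $\tau_{ij}=\mu_i^{-1}\circ(\mu_j\star\psi_{ij})$ hold by construction, and invoke Lemma~\ref{delta}.

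The main obstacle I expect is the bookkeeping in the transfinite construction: ensuring that the isomorphisms $\mu_i$ chosen at different stages are mutually compatible, i.e. that the recursion does not paint itself into a corner. The 1-cocycle condition on $\tau$ is exactly what makes this possible — it is the associativity/coherence that lets a ``locally defined'' trivialisation be extended — but making the induction hypothesis strong enough to propagate through limit ordinals requires care. An alternative that sidesteps the transfinite machinery is to prove the statement first for $I$ with a top element (the computation sketched above), then note that any filtered $I$ is the filtered colimit (union) of its principal-ideal-with-added-top... no, that is not quite right either; more honestly, one reduces to the cofinal case and uses that $\tc$ and $\colim$ both commute with the relevant filtered reindexing. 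I would present the top-element computation as the conceptual heart and then indicate how filteredness lets one run it ``in the colimit''. Either way, once the cocycle is shown to be a coboundary, Lemma~\ref{delta} closes the proof.
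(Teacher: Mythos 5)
Your top-element computation is correct (up to the harmless direction issue: with $\beta_i'=\beta_M\circ\psi_{iM}$ you need $\mu_i=\tau_{iM}^{-1}$, since $\mu_i$ must go $\beta_i\Rightarrow\beta_i'$), but it only proves the proposition when $I$ has a maximum, and that is not where the content lies. For a general filtered poset your argument stops at a sketch: the transfinite recursion is never set up (a well-ordering of $I$ is unrelated to the partial order, so it is not clear what ``$\beta_i'=\beta_j'\psi_{ij}$ for all $j$ treated so far'' should even mean at a given stage, and the compatibility of the $\mu_i$ across stages -- which you yourself flag as the obstacle -- is exactly the point that needs proof), and your fallback ``reduce to the cofinal case'' is withdrawn in the same sentence and indeed does not work as stated: a filtered poset need not contain a cofinal chain or any subposet with a top to which the easy computation applies. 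So as written there is a genuine gap: the criterion of Lemma \ref{delta} is verified only in a special case that bypasses filteredness altogether.

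For comparison, the paper does not strictify a cocycle at all; it uses the explicit Gabriel--Zisman description of the filtered colimit (objects are equivalence classes $\{X\}$, where $X\in\G_i$ and $Y\in\G_j$ are identified when $\psi_{ik}(X)=\psi_{jk}(Y)$ for some $k\geq i,j$, and similarly for morphisms) together with Brylinski's description of $\tc\G$, whose objects are the disjoint union of the objects of the $\G_i$ and whose hom-sets already coincide with those of $\colim\G$; then $\delta$ is visibly fully faithful and surjective on objects. Your strictification route can in fact be completed, but only by importing the same ingredient: choose a representative $(i_c,X_c)$ in each equivalence class $c$ of objects, define $\mu_i(X)$ for $X\in\G_i$ with $\{X\}=c$ as $\tau_{i_ck}(X_c)\circ\tau_{ik}(X)^{-1}$ for a common upper bound $k$ with $\psi_{ik}(X)=\psi_{i_ck}(X_c)$, prove independence of $k$ using filteredness plus the $1$-cocycle condition, and then conjugate, setting $\beta_i'(f)=\mu_i(Y)\circ\beta_i(f)\circ\mu_i(X)^{-1}$; the identities $\beta_i'=\beta_j'\circ\psi_{ij}$ and $\tau_{ij}=\mu_i^{-1}\circ(\mu_j\star\psi_{ij})$ then follow from the cocycle condition and naturality of the $\tau$'s. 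That is the argument your proposal would need to contain; no transfinite recursion is necessary, but the equivalence-class analysis of the filtered colimit (or something equivalent to it) cannot be avoided.
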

\begin{proof}  In this case the construction of $\colim\limits_i\G_i $ given in \cite[p.36]{gz} has the following description. 
Let  $X$ and $Y$ be objects of the  categories $\G_i$ and $\G_j$ respectively. We will say that $X$ and $Y$ are equivalent, if there exists $k\in I$, such that  $i\leq k,j\leq k$  and $\psi_{ik}(X)=\psi_{jk}(Y).$ The equivalence class of $X$ is denoted by $\{X\}$. The collection of such equivalence classes forms the class of all objects of the category $\colim\limits_i\G_i $.  If $\{X\}$ and $\{Y\}$ are two equivalence classes, assuming $X\in \G_i$ and $Y\in \G_j$, then a morphism $\{X\}\to \{Y\}$ is the equivalence class of morphisms $f:\psi_{ik}(X)\to \psi_{jk}(Y)$ of the category $\G_k$, where $i\leq k, j\leq k$. Here two morphisms $f:\psi_{ik}(X)\to \psi_{jk}(Y)$ and $g:\psi_{im}(X)\to \psi_{jm}(Y)$, $i\leq m, j\leq m$ are equivalent if there exists an element $n\in I$ such that $k\leq n, m\leq n$ and $\psi_{kn}(f)=\psi_{mn}(g).$

Recall that the construction of $\tc \G$ given in  \cite[pp. 192-193] {br} shows that the objects of $\tc \G$ are a disjoint union of the objects of $\G_i$, $i\in I$, while for $X\in \G_i$ and $Y\in \G_j$  one has  
$$\hom_{\tc\limits_i \G_i}(X,Y)=\hom_{\colim\limits_i \G_i}(\{X\},\{Y,\}).$$
It follows that the functor $\delta$, which sends $X$ to $\{X\}$ is obviously full and faithful and surjective on objects. Hence $\delta$ is an equivalence of categories.
\end{proof}
\begin{Pro}\label{2} Let  
$$\xymatrix{ {\bf A}\ar[r]^{i_1}\ar[d]_{i_2} & {\bf B} \\ 
	         {\bf C} & }$$
be a diagram of groupoids, where $i_1$ is injective on objects. Then the colimit and 2-colimit are equivalent. 
\end{Pro}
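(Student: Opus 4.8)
The plan is to verify the criterion of Lemma~\ref{delta} for the poset $I=\{0,1,2\}$ with $0\leq 1$ and $0\leq 2$ (and $1,2$ incomparable), identifying $\G_0={\bf A}$, $\G_1={\bf B}$, $\G_2={\bf C}$, $\psi_{01}=i_1$ and $\psi_{02}=i_2$. Every chain $i\leq j\leq k$ in $I$ has a repeated index, so the $1$-cocycle condition in Lemma~\ref{delta} is vacuous, and the data to deal with is simply: a groupoid ${\bf D}$, functors $\beta_0\colon{\bf A}\to{\bf D}$, $\beta_1\colon{\bf B}\to{\bf D}$, $\beta_2\colon{\bf C}\to{\bf D}$, and natural isomorphisms $\tau_{01}\colon\beta_1 i_1\Longrightarrow\beta_0$ and $\tau_{02}\colon\beta_2 i_2\Longrightarrow\beta_0$. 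Putting $\beta_2':=\beta_2$, $\mu_2:=\mathrm{id}_{\beta_2}$, $\beta_0':=\beta_2 i_2$ and $\mu_0:=\tau_{02}\1$, the required identities $\beta_0'=\beta_2'i_2$ and $\tau_{02}=\mu_0\1\circ(\mu_2\star i_2)$ hold at once, so the whole problem reduces to producing a functor $\beta_1'$ with $\beta_1' i_1=\beta_2 i_2$ on the nose, together with a natural isomorphism $\mu_1\colon\beta_1\Longrightarrow\beta_1'$ whose whiskering $\mu_1\star i_1$ equals $\tau:=\tau_{02}\1\circ\tau_{01}\colon\beta_1 i_1\Longrightarrow\beta_2 i_2$.

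I would build $\beta_1'$ by transporting $\beta_1$ along $\tau$ over the image of $i_1$. On objects, set $\beta_1'(b):=\beta_2(i_2(a))$ whenever $b=i_1(a)$ for some object $a$ of ${\bf A}$, and $\beta_1'(b):=\beta_1(b)$ otherwise; this is the one place where injectivity of $i_1$ on objects is used, since it makes $a$, and hence $\beta_2(i_2(a))$, unambiguous. Then define isomorphisms $(\mu_1)_b\colon\beta_1(b)\to\beta_1'(b)$ by $(\mu_1)_{i_1(a)}:=\tau_a$ and $(\mu_1)_b:=\mathrm{id}$ for $b$ not in the image of $i_1$, and extend $\beta_1'$ to morphisms by conjugation, $\beta_1'(f):=(\mu_1)_{b'}\circ\beta_1(f)\circ(\mu_1)_b\1$ for $f\colon b\to b'$ in ${\bf B}$. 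A direct check will confirm that $\beta_1'$ is indeed a functor and that $\mu_1$ is then a natural isomorphism $\beta_1\Longrightarrow\beta_1'$ with $\mu_1\star i_1=\tau$.

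It remains to check $\beta_1' i_1=\beta_2 i_2$: on objects this is the definition, and on a morphism $g\colon a\to a'$ of ${\bf A}$ one computes $\beta_1'(i_1 g)=\tau_{a'}\circ\beta_1(i_1 g)\circ\tau_a\1$, which equals $\beta_2(i_2 g)$ exactly by naturality of $\tau\colon\beta_1 i_1\Longrightarrow\beta_2 i_2$. Hence the hypothesis of Lemma~\ref{delta} is satisfied and $\delta$ is an equivalence of categories. The only genuine content is getting $\beta_1'$ well defined on objects, so the ``hard part'', such as it is, consists exactly of the place where injectivity of $i_1$ enters; and the statement really does fail without that hypothesis. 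From a more structural point of view this is the fact that functors injective on objects are the cofibrations of the folk model structure on $\Grp$, so that the ordinary pushout along $i_1$ already realizes the homotopy pushout.
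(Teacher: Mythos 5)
Your proof is correct and follows essentially the same route as the paper: you reduce the statement to the criterion of Lemma \ref{delta} and then establish exactly the content of the paper's deformation lemma (Lemma \ref{deformation}), redefining the functor on the image of $i_1$ (where injectivity on objects is used) and transporting along the natural transformation. The only difference is cosmetic: your uniform conjugation formula $\beta_1'(f)=(\mu_1)_{b'}\circ\beta_1(f)\circ(\mu_1)_b\1$ packages in one stroke the paper's five-case definition of $j_1'$ on morphisms.
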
 
Thanks to Lemma \ref{delta} the result follows  from the following deformation lemma.
\begin{Le} \label{deformation} Let $$\xymatrix{ {\bf A} \drtwocell\omit{\lambda}\ar[r]^{i_1}\ar[d]_{i_2} & {\bf B}\ar[d]^{j_1} &  \\ {\bf C}\ar[r]_{j_2} & {\bf D} &  }$$
be a $2$-diagram of groupoids, i.e. $\lambda:j_1i_1\Longrightarrow j_2i_2$ is a natural transformation.
If the functor $i_1$ is injective on objects, then there exists a functor $j_1':{\bf B}\to {\bf D}$ and a natural transformation $\kappa:j_1\Longrightarrow j_1'$ for which the diagram 
$$\xymatrix{ {\bf A}\ar[r]^{i_1}\ar[d]_{i_2} & {\bf B}\ar[d]^{j_1'} \\ {\bf C} \ar[r]_{j_2}& {\bf D}}$$
commutes and $\lambda$ coincides with $\kappa\star i_1:j_1i_1\Longrightarrow j_1'i_1=j_2i_2.$
\end{Le}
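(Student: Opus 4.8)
\ The plan is to replace $j_1$ by a functor $j_1'$ obtained from $j_1$ by conjugating with a natural isomorphism $\kappa$ that agrees with $\lambda$ on the objects coming from ${\bf A}$ and is the identity everywhere else. The hypothesis that $i_1$ is injective on objects is exactly what makes this well defined: every object $b$ of ${\bf B}$ lying in the image of $i_1$ has a \emph{unique} preimage $a\in{\bf A}$, so there is no ambiguity in the formulas below.

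First I would define $j_1'$ and $\kappa$ on objects. If $b=i_1(a)$ for the (necessarily unique) $a\in{\bf A}$, put $j_1'(b):=j_2i_2(a)$ and let $\kappa_b:=\lambda_a\colon j_1i_1(a)\to j_2i_2(a)$; this is an isomorphism of ${\bf D}$ since ${\bf D}$ is a groupoid. If $b$ is not in the image of $i_1$, put $j_1'(b):=j_1(b)$ and $\kappa_b:=\mathrm{id}_{j_1(b)}$. Next, transport $j_1$ along $\kappa$: for a morphism $f\colon b\to b'$ of ${\bf B}$ set
$$j_1'(f):=\kappa_{b'}\circ j_1(f)\circ\kappa_b^{-1}\colon j_1'(b)\longrightarrow j_1'(b').$$
A one-line check that $j_1'$ preserves identities and composition shows it is a functor, and the displayed formula is precisely the naturality square for $\kappa\colon j_1\Longrightarrow j_1'$, so $\kappa$ is a natural transformation by construction.

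It then remains to check that the square commutes strictly and that $\kappa\star i_1=\lambda$. On objects, $j_1'i_1(a)=j_2i_2(a)$ by definition. On a morphism $\phi\colon a\to a'$ of ${\bf A}$,
$$j_1'\big(i_1(\phi)\big)=\kappa_{i_1(a')}\circ j_1i_1(\phi)\circ\kappa_{i_1(a)}^{-1}=\lambda_{a'}\circ j_1i_1(\phi)\circ\lambda_a^{-1}=j_2i_2(\phi),$$
where the last equality is the naturality of $\lambda\colon j_1i_1\Longrightarrow j_2i_2$. Hence $j_1'i_1=j_2i_2$. Finally $(\kappa\star i_1)_a=\kappa_{i_1(a)}=\lambda_a$ for every $a\in{\bf A}$, so $\kappa\star i_1=\lambda$, as required.

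The argument has no genuine obstacle; the only point demanding care is the well-definedness of $j_1'$ and $\kappa$ on the image of $i_1$, and this is exactly where injectivity on objects enters — if $i_1$ identified two objects $a\neq a'$ of ${\bf A}$, then $j_2i_2(a)$ and $j_2i_2(a')$ (and likewise $\lambda_a$, $\lambda_{a'}$) need not coincide and the construction would collapse. Everything else is a routine diagram chase using only the naturality of $\lambda$ and the fact that ${\bf D}$ is a groupoid, which guarantees that the components $\kappa_b$ are invertible.
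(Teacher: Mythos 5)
Your proposal is correct and constructs exactly the same $j_1'$ and $\kappa$ as the paper: the hypothesis of injectivity on objects gives the well-definedness on $\mathrm{Im}(i_1)$, and the verification of $j_1'i_1=j_2i_2$ and $\kappa\star i_1=\lambda$ runs via naturality of $\lambda$ just as in the paper. The only difference is presentational: your uniform conjugation formula $j_1'(f)=\kappa_{b'}\circ j_1(f)\circ\kappa_b^{-1}$ subsumes the paper's five-case definition of $j_1'$ on morphisms and makes the independence-of-choice check in the paper's Case 3 unnecessary, since your definition involves no choice of preimage of a morphism.
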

\begin{proof} Define $j_1'$ on objects by
$$j_1'(b)=\begin{cases}j_1(b), & {\rm if} \ b\not\in Im(i_1)\\ j_2i_2(a),& {\rm if} \ b=i_1(a).\end{cases}$$
To define $j_1'$ on morphisms we proceed as follows.
Let $\beta:b_1\to b_2$ be a morphism in $\bf B$. To define $$j_1'(\beta):j_1'(b_1)\to j_1'(b_2)$$ we have to consider five different cases.

{\it Case 1}. $b_1=i_1(a_1)$ and $b_2\not\in Im(i_1)$. One defines $j_1'(\beta)$ to be the composite:
$$j_1'(b_1)=j_2i_2(a_1)\xto{\lambda^{-1}(a_1)}j_1i_1(a_1)=j_1(b_1)\xto{j_1(\beta)} j_1(b_2)=j_1'(b_2).$$

{\it Case 2}. $b_1=i_1(a_1)$, $b_2=  i_1(a_2)$, but $\beta \not \in Im(i_1)$. One defines $j_1'(\beta)$ to be the composite:
$$j_1'(b_1)=j_2i_2(a_1)\xto{\lambda^{-1}(a_1)}j_1i_1(a_1)=j_1(b_1)\xto{j_1(\beta)} j_1(b_2)=j_1i_1(a_2)\xto{\lambda(a_2)} j_2i_2(a_2)=j_1'(b_2).$$

{\it Case 3}. $\beta\in Im(j_1)$. We choose $\alpha:a_1\to a_2$ in $\bf A$ such that $\beta=i_1(\alpha)$. One defines $j_1'(\beta)$ by
$$j_1'(b_1)=j_2i_2(a_1)\xto{j_2i_2(\alpha)} j_2i_2(a_2)=j_1'(b_2).$$
To check that this is independent on the choice of $\alpha$, let us also consider also $\alpha':a_1\to a_2$ with the property $i_1(\alpha')=\beta$. Since $\lambda$ is a natural transformation, we have a commutative diagram
$$\xymatrix{j_1i_1(a)\ar[rr]^{\lambda(a)}\ar@<-.5ex>[d]_{_{j_1i_1(\alpha)}}\ar@<.5ex>[d]^{_{j_1i_1(\alpha')}} & &  j_2i_2(a)\ar@<-.5ex>[d]_{_{j_2i_2(\alpha)}}\ar@<.5ex>[d]^{_{j_2i_2(\alpha')}} \\
	          j_1i_1(a')\ar[rr]_{\lambda(a')} & & j_2i_2(a') }$$
Since the left vertical arrows are equal and horizontal ones are isomorphisms, it follows that the right vertical arrows are also equal. Hence $j_1'(\beta)$ is well-defined in this case.

{\it Case 4}. $b_1\not\in Im(i_1)$ and $b_2=i_1(a_2)$. One defines $j_1'(\beta)$ to be the composite:
$$j_1'(b_1)=j_1(b_1)\xto{j_1(\beta)} j_1(b_2)=j_1i_i(a_2)\xto{\lambda(a_2)} j_2i_2(a_2)=j_2'(b_2)$$

{\it Case 5}. $b_1\not\in Im(i_1)$  and $b_2\not\in Im(i_1)$. One defines $j_1'(\beta)$ as
$$j_1'(b_1)=j_1(b_1)\xto{j_1(\beta)} j_1(b_2)=j_1'(b_2)$$
Checking case by case shows that $j_1'$ is really a functor, with $j_1'i_1=j_2i_2$. Define $\kappa$  by
$$\kappa(b)=\begin{cases} Id_{j_1(b)}, & {\rm if} \ b\not\in Im(i_1)\\ \lambda(a),& {\rm if} \ b=i_1(a). \end{cases} $$
One easily sees that $j_1'$ and $\kappa$ satisfy the assertions of the Lemma.

\end{proof}

It follows that the classical  Van Kampen theorem  \cite[p. 226]{Brown} can be restated as follows.

\begin{Th} \label{vk} (Van Kampen) Let $U$ and $V$ be open subsets of $X$. Assume $X= U \cup V$. Then the diagram is simultaneously a pushout and $2$-pushout of groupoids:
$$\xymatrix{\Pi_1(W) \ar[d]\ar[r] & \Pi_1(U)\ar[d]\\ \Pi_1(V)\ar[r] & \Pi_1(X),}$$
where  $W=U \cap V$. 
\end{Th}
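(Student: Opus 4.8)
The plan is to deduce Theorem \ref{vk} directly from the results already assembled, treating it purely as the two-member special case of Theorem \ref{Pi1}. First I would observe that the cover $\{U,V\}$ of $X$ has nerve consisting of $U$, $V$ and the single intersection $W=U\cap V$ (all triple intersections being redundant since $U\cap U=U$, $U\cap V=V\cap U=W$, etc.), so the cosheaf exactness diagram of Definition \ref{cosh} collapses to the statement that $\Pi_1(X)$ is the coequaliser of the two maps $\Pi_1(W)\rightrightarrows \Pi_1(U)\sqcup\Pi_1(V)$, which is exactly the assertion that the square in the statement is a pushout of groupoids. Likewise the costack exactness diagram of Definition \ref{cost} collapses, modulo the degeneracies, to the statement that $\Pi_1(X)$ is the $2$-colimit of $\Pi_1(U)\leftarrow\Pi_1(W)\rightarrow\Pi_1(V)$, i.e.\ that the square is a $2$-pushout. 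So once Theorem \ref{Pi1} is granted, both halves follow by simply unwinding what ``cosheaf'' and ``costack'' mean on a two-element cover.

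Second, I would make explicit why the pushout assertion is literally the classical Van Kampen theorem cited at \cite[p.~226]{Brown}: Brown's formulation already states that $\Pi_1$ sends the covering $X=U\cup V$ to a pushout square, so the cosheaf clause of Theorem \ref{Pi1}, restricted to this cover, \emph{is} that theorem. For the $2$-pushout clause I would then invoke Lemma \ref{delta} together with Proposition \ref{2}: the span $\Pi_1(U)\leftarrow\Pi_1(W)\rightarrow\Pi_1(V)$ has the property that the map $\Pi_1(W)\to\Pi_1(U)$ induced by the inclusion $W\hookrightarrow U$ is injective on objects (it is the inclusion of the point-set $W$ into the point-set $U$, since objects of a fundamental groupoid are points of the space). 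Hence Proposition \ref{2} applies verbatim and tells us that for this particular span the colimit and the $2$-colimit are equivalent as groupoids. Combining this with the pushout statement gives $\mathsf{2colim}\bigl(\Pi_1(U)\leftarrow\Pi_1(W)\rightarrow\Pi_1(V)\bigr)\simeq \mathrm{colim}\bigl(\Pi_1(U)\leftarrow\Pi_1(W)\rightarrow\Pi_1(V)\bigr)\cong\Pi_1(X)$, which is precisely the $2$-pushout assertion.

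I expect the only real subtlety — and the step I would be most careful about — to be the bookkeeping that the costack diagram in Definition \ref{cost}, which is written for an arbitrary index set with genuine triple intersections, really does reduce to the simple cospan when the cover has two members. One has to check that the two copies of $U$ and $V$ contributed by the diagonal terms $U_i\cap U_i$ and the reshuffled $U_i\cap U_j$ do not impose extra relations beyond those coming from the single essential term $W$; concretely, the degenerate components of $b_0,b_1$ act as identities and can be discarded, exactly as in the cosheaf case where one routinely passes from the full \v{C}ech diagram to the reduced one. Once that reduction is recorded, the proof is essentially a citation of Theorem \ref{Pi1}, Proposition \ref{2} and Lemma \ref{delta}, so I would keep it to a few lines: cite Brown for the pushout, apply Proposition \ref{2} for the coincidence of colimit and $2$-colimit on an object-injective span, and conclude.

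\begin{proof}
Apply Theorem \ref{Pi1} to the two-element open cover $\{U,V\}$ of $X=U\cup V$. Writing $W=U\cap V$, the nerve of this cover has, up to the degeneracies $U\cap U=U$, $V\cap V=V$ and $U\cap V=W$, only the three spaces $U$, $V$, $W$, and the cosheaf exactness condition of Definition \ref{cosh} becomes exactly the statement that
$$\xymatrix{\Pi_1(W) \ar[d]\ar[r] & \Pi_1(U)\ar[d]\\ \Pi_1(V)\ar[r] & \Pi_1(X)}$$
is a pushout of groupoids; this is the classical Van Kampen theorem \cite[p.~226]{Brown}. Similarly, after discarding the degenerate components of the face maps (which act as identities), the costack exactness condition of Definition \ref{cost} reduces to the statement that $\Pi_1(X)$ is the $2$-colimit of the span $\Pi_1(U)\leftarrow\Pi_1(W)\rightarrow\Pi_1(V)$, i.e.\ that the square above is a $2$-pushout.

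It remains to see that the pushout is already the $2$-pushout. The objects of $\Pi_1(W)$ and $\Pi_1(U)$ are the points of $W$ and of $U$, and the functor $\Pi_1(W)\to\Pi_1(U)$ induced by the inclusion $W\hookrightarrow U$ is on objects just this inclusion of point-sets, hence injective on objects. Therefore Proposition \ref{2} applies to the span, and the colimit and $2$-colimit of $\Pi_1(U)\leftarrow\Pi_1(W)\rightarrow\Pi_1(V)$ are equivalent groupoids. Combining this with the pushout statement gives that the square is simultaneously a pushout and a $2$-pushout.
\end{proof}
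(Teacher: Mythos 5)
Your second paragraph is, on its own, exactly the paper's argument: the pushout statement is the classical Van Kampen theorem of \cite[p.~226]{Brown} for a two-member open cover, and since the objects of a fundamental groupoid are the points of the space, the functor $\Pi_1(W)\to\Pi_1(U)$ induced by $W\hookrightarrow U$ is injective on objects, so Proposition \ref{2} (via the deformation Lemma \ref{deformation} and Lemma \ref{delta}) shows that the colimit and the $2$-colimit of the span coincide up to equivalence, whence the pushout square is also a $2$-pushout. The paper states Theorem \ref{vk} immediately after Proposition \ref{2} with precisely this justification (``It follows that the classical Van Kampen theorem can be restated as follows''), so on this route you and the paper agree.

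However, the framing of your first paragraph --- deducing Theorem \ref{vk} as ``the two-member special case of Theorem \ref{Pi1}'' --- is circular in the logical structure of the paper and must be discarded. The proof of Theorem \ref{Pi1} in Section \ref{pr22} begins by invoking Theorem \ref{vk} itself: it is the $sh(2)$/$st(2)$ base case, which is then bootstrapped to finite covers by Lemma \ref{finite} and to arbitrary covers by Lemma \ref{general} and Proposition \ref{filtered}. So Theorem \ref{vk} is an input to Theorem \ref{Pi1}, not a consequence of it, and your opening sentence ``Apply Theorem \ref{Pi1} to the two-element open cover'' cannot stand as the proof. Since the rest of your proof (Brown's theorem for the pushout, Proposition \ref{2} for the equivalence of colimit and $2$-colimit) is self-contained and complete, the fix is simply to delete the appeal to Theorem \ref{Pi1} and the accompanying discussion of degeneracies in the \v{C}ech diagram, and lead with the Brown citation plus Proposition \ref{2}; that is the paper's proof.
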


The advantage of  $2$-pushouts is that the fundamental groupoids involved can be changed by equivalent ones. Here is an example to demonstrate the usefulness of $2$-pushouts.

\begin{Ex} \label{ex}  Let us cover $S^1$ by $U$ and $V$, where both are open subsets of $S^1$, with a single point removed (of course different). Since $\Pi_1(U)$ and $\Pi_1(V)$ are equivalent to the groupoid ${\bf 1}$ with one object and one morphism, and $\Pi_1(W)$ is equivalent to the groupoid
${\bf 1} \coprod {\bf 1}$, we see that $\Pi_1(S^1)$ is equivalent to the $2$-pushout (and hence pushout) of the diagram
$$\xymatrix{{\bf 1} \coprod {\bf 1} \ar[r] \ar[d] & {\bf 2} \\ {\bf 1} &}$$
where $\bf 2$ is the simply connected groupoid with two objects and the horizontal arrow is bijective on objects. Clearly the pushout of this diagram is the groupoid with one object and the infinite cyclic group as the group of automorphisms. Compare this computation with the computation in \cite[pp. 233-234] {Brown} which uses the more complicated groupoid $\Pi_1XA$, where $A$ is an additional set. 
\end{Ex}

\section{Proof of Theorem \ref{Pi1}}\label{pr22}
Let $X$ be a topological space and $\F:\mathfrak{Off}(X)^{op}\rightarrow \mathsf{Grp}$ be a functor. 

\begin{De}
Let $n\geq 2$ be an integer. We will say that $\F$ satisfies the property ${\mathsf sh}(n)$ (resp. ${\mathsf st(n)})$ if for any open subset $U$ of $X$ and any open cover $U_1,\cdots, U_n$ of $U$, the natural functor 
$$\xymatrix{\F(U)\ar[r] & \lim [\prod_i \F(U_i)\ar@<.5ex>[r]\ar@<-.5ex>[r] & \prod_{i,j}\F(U_i\cap U_j)]}$$
is an isomorphism of categories, (resp.
$$\xymatrix{\F(U)\ar[r] & \tl [\prod_i \F(U_i)\ar@<.5ex>[r]\ar@<-.5ex>[r] & \prod_{i,j}\F(U_i\cap U_j)\ar@<.7ex>[r]\ar[r]\ar@<-.7ex>[r] & \prod\limits_{i,j,k}\F(U_i\cap U_j\cap U_k)]}$$ 
is an equivalence of categories). That is to say, $\F$ satisfies the sheaf (resp. stack) condition for all coverings having $n$-members.
\end{De}

The aim of this section is to prove Theorem \ref{Pi1}, which will be done with formal arguments. In Thm. \ref{vk} we have shown that $U\rightarrow \hom_{\Grp}(\Pi_1(U),{\sf G})$ satisfies the $sh(2)$ as well as the $st(2)$ property for any groupoid $G$. In order to show that it is a sheaf (resp. a stack), we have to show that these conditions hold for general coverings, see Definition \ref{cosh} and Definition \ref{cost}. To do so, we will first show that $sh(2)$ (resp. $st(2)$) implies $sh(n)$ (resp. $st(n)$) for any $n\in\mathbb{N}$ and then use \ref{filtered} to prove the general case. 

\begin{Le} \label{finite} If $\F$ satisfies the condition ${\mathsf sh}(2)$, (resp. ${\mathsf st}(2)$) then $\F$ satisfies the condition ${\mathsf sh(n)}$ (resp. ${\mathsf st}(n)$) for any $n\geq 2$. 
\end{Le}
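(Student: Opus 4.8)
The plan is to prove Lemma \ref{finite} by induction on $n$, the base case $n=2$ being the hypothesis. So suppose the assertion holds for some $n \geq 2$, and let $U_1, \dots, U_{n+1}$ be an open cover of $U$. First I would split this cover in two: set $V = U_1 \cup \dots \cup U_n$ and $W = U_{n+1}$, so that $U = V \cup W$ is a two-member cover. Applying the ${\mathsf sh}(2)$ (resp. ${\mathsf st}(2)$) hypothesis to this cover, $\F(U)$ is the limit (resp. $2$-limit) of the diagram built from $\F(V)$, $\F(W)$ and $\F(V \cap W)$. Now $V \cap W = \bigcup_{i=1}^{n} (U_i \cap U_{n+1})$ is an $n$-member cover of $V\cap W$, and $U_1, \dots, U_n$ is an $n$-member cover of $V$; by the inductive hypothesis, $\F(V)$ and $\F(V\cap W)$ are themselves the limits (resp. $2$-limits) of the corresponding $n$-member diagrams. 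Substituting these descriptions in, one gets a presentation of $\F(U)$ as an iterated limit (resp. $2$-limit).

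The key remaining step is then purely diagrammatic: one must check that this iterated limit (resp. $2$-limit) is canonically isomorphic (resp. equivalent) to the single limit (resp. $2$-limit) of the $(n+1)$-member cosimplicial-type diagram
$$\xymatrix{\prod_i \F(U_i)\ar@<.5ex>[r]\ar@<-.5ex>[r] & \prod_{i,j}\F(U_i\cap U_j)\ar@<.7ex>[r]\ar[r]\ar@<-.7ex>[r] & \prod_{i,j,k}\F(U_i\cap U_j\cap U_k)}$$
where now the indices run over $\{1, \dots, n+1\}$. This is a Fubini-type statement for limits: a limit over a diagram can be computed by first taking limits over a partition of the index category and then assembling. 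For ordinary limits this is standard. For $2$-limits it is the corresponding (well-known, though more delicate) fact that $2$-limits commute with $2$-limits, which holds because $\Grp$ as a $2$-category has all the $2$-limits in sight and they are preserved by the relevant forgetful functors; concretely one unwinds the description of objects of a $2$-limit (families of objects together with coherent connecting isomorphisms satisfying the $1$-cocycle condition, exactly as recalled in Section \ref{2lim}) on both sides and exhibits the bijection on objects and morphisms, checking the cocycle conditions match up. The bookkeeping is that the triple intersections $U_i \cap U_j \cap U_{n+1}$ for $i,j \leq n$ arising from the $n$-member stack condition on $V \cap W$ are precisely the new triple intersections needed in the $(n+1)$-member diagram, and the $1$-cocycle relations glue together correctly.

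The main obstacle, and the only place any real care is needed, is this last verification in the $2$-categorical (stack) case: one has to be honest that the natural comparison functor between the iterated $2$-limit and the $(n+1)$-fold $2$-limit is not merely essentially surjective and fully faithful "by abstract nonsense" but can be written down and its coherence data (the natural transformations $\xi_{ij}$, the $1$-cocycle identities) tracked through the two-step decomposition. The potential subtlety is that a $2$-limit is only defined up to equivalence, so when we replace $\F(V)$ and $\F(V\cap W)$ by their $n$-member $2$-limit models we should either fix strict models or carry the equivalences along; since the statement only claims the comparison functor $\F(U) \to \tl[\cdots]$ is an \emph{equivalence}, this is harmless, but it must be acknowledged. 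I would handle it by noting that the $2$-limit of a diagram of groupoids admits the explicit strict model recalled in Section \ref{2lim}, performing the identification of objects and morphisms on those strict models, and then invoking uniqueness of $2$-limits up to equivalence to transport the conclusion back to $\F(U)$. Everything else — the base case, the Fubini step for ordinary limits, the reindexing of intersections — is routine.
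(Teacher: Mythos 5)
Your proposal is correct and essentially the paper's own argument: the paper likewise splits the cover by setting $V=U_1\cup U_2$ (it treats only $n=3$ explicitly, declaring the general case a matter of notation), applies the two-member condition to $\{V,U_3\}$ and again to $V$, and then performs exactly the explicit unwinding of objects, connecting isomorphisms and $1$-cocycle conditions on the strict model of the $2$-limit that you describe as the key verification. The only cosmetic difference is that you organise this as an induction on $n$ with $V=U_1\cup\dots\cup U_n$, whereas the paper writes out the three-member case in full and leaves the bookkeeping for general $n$ implicit.
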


\begin{proof} We consider only the case $n=3$, since the only difference between this and the general case is notation. By definition, the objects of 
$$\xymatrix{\tl[\prod_i \F(U_i)\ar@<.5ex>[r]\ar@<-.5ex>[r] & \prod_{i,j}\F(U_i\times_UU_j)\ar@<.7ex>[r]\ar[r]\ar@<-.7ex>[r] & \prod\limits_{i,j,k}\F(U_i\times_UU_j\times_UU_k)]}$$ 
are $$((g_1,g_2,g_3),\alpha_{12}, \alpha_{13},\alpha_{23})$$
where $g_i$ is an object of $\F(U_i)$, $i=1,2,3$ and 
$\alpha_{12}:g_1|_{U_{12}}\rightarrow g_2|_{U_{12}}$, $\alpha_{13}:g_1|_{U_{13}}\rightarrow g_3|_{U_{13}}$  and $\alpha_{23}:g_2|_{U_{23}}\rightarrow g_3|_{U_{23}}$ are morphisms respectively in $\F(U_{12})$, $\F(U_{13})$  and $\F(U_{23})$. Here $U_{ij}=U_i\cap U_j$. One also requires that these data satisfy
the 1-cocylce condition. The morphisms are $(h_1:g_1\rightarrow g'_1, h_2:g_2\rightarrow g'_2, h_3:g_3\rightarrow g'_3)$, such that restricted on the intersections, the obvious diagrams commute.

We set $V=U_1\cup U_2$. Since $\F$ satisfies the condition $st(2)$, we can assume that the objects of $F(U)=F(V \cup U_3)$ are
the same as of the category 
$$\xymatrix{\tl[\F(V)\times\F(U_3) \ar@<.5ex>[r]\ar@<-.5ex>[r] & \F(V\cap U_3)]}$$
So we can assume that they are of the form 
$$\{((g_v,g_3),\gamma:g_v|_{V\cap U_3}\rightarrow g_3|_{V\cap U_3})\},$$
where  $g_v$ is an object of $\F(V)$. But since $V=U_1\cup U_2$ and $\F$ satisfies the condition $st(2)$ we may assume that the objects of $\F(V)$ have the form 
$$g_v=\{((g_1,g_2),\delta:g_1|_{U_{12}}\rightarrow g_2|_{U_{12}})\}.$$ 
Thus the objects  of $\F(V\cup U_3)$ are 
$$\{((g_v,g_3),\gamma:g_v|_{V\cap U_3}\rightarrow g_3|_{V\cap U_3})\}.$$ 
Hence the objects of $\F(U)=\F(V\cup U_3)$, can be seen as 
$$\{((g_1,g_2,g_3),\delta:g_1|_{U_{12}}\rightarrow g_2|_{U_{12}}, \gamma_1:g_1|_{U_{13}}\rightarrow g_3|_{U_{13}}, \gamma_2:g_2|_{U_{23}}\rightarrow g_3|_{U_{23}})\},$$
such that 
$$\xymatrix{ g_1|_{U_{123}}\ar[r]\ar[d]_{\delta|_{U_{123}}} & g_3|_{U_{123}}\ar[d]^{id} \\ 
	           g_2|_{U_{123}}\ar[r] & g_3|_{U_{123}} }$$commutes. 
	           Since the last condition  is exactly  the 1-cocycle condition, we can see that the categories $\F(U)$ and $\xymatrix{\tl[\prod_i \F(U_i)\ar@<.5ex>[r]\ar@<-.5ex>[r] & \prod_{i,j}\F(U_i\times_UU_j)\ar@<.7ex>[r]\ar[r]\ar@<-.7ex>[r] & \prod\limits_{i,j,k}\F(U_i\times_UU_j\times_UU_k)]}$ have essentially the same objects. Observe that the morphims of $\F(U)=\F(V\cup U_3)$ are $\{(\varphi_V:g_V\rightarrow g'_V, \varphi_{U_3}:g_3\rightarrow g_3')\}$, such that the obvious diagram commutes. Then morphisms of $\F(V)$ itself are $\{(\lambda_{U_1}:g_1\rightarrow g_1',\lambda_{U_2}:g_2\rightarrow g_2')\},$ such that again the obvious diagram commutes. Hence putting them together, we get $\{(\varphi_V:g_{U_1\cup U_2}\rightarrow g'_{U_1\cup U_2}, \varphi_{U_3}:g_3\rightarrow g_3')\}$ i.e. $\{(\varphi_{U_1}:g_1\rightarrow g'_1, \varphi_{U_2}:g_2\rightarrow g_2'),\varphi_{U_3}:g_3\rightarrow g_3')\}$ such that on the intersections, the restrictions agree. This shows the equivalence of $\F(U)$ and $\xymatrix{\tl[\prod_i \F(U_i)\ar@<.5ex>[r]\ar@<-.5ex>[r] & \prod_{i,j}\F(U_i\times_UU_j)\ar@<.7ex>[r]\ar[r]\ar@<-.7ex>[r] &\prod\limits_{i,j,k}\F(U_i\times_UU_j\times_UU_k)]}$. Hence $st(3)$ holds. 

The fact that ${\mathsf sh}(2)$ implies ${\mathsf sh}(3)$ is a special case of the above proof, by simply taking $\alpha_i$'s  to be identities.
\end{proof} 

To prove the general case, we basically only have to prove the following lemma: 

\begin{Le} \label{general} Let $\G:\mathfrak{Off}(X)\to {\Grp}$ be  a functor which commutes with filtered colimits. Then $\G$ is a cosheaf (resp. costack) if and only if for all groupoids ${\sf G}$, the functor $\F$ defined by $\F(U)=\hom_{Grp}(\G(U),{\sf G})$ satisfies the ${sh}(n)$ (resp.${st}(n)$) condition for all $n$. 
\end{Le}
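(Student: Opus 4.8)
The plan is to establish the implication ``$\G$ a cosheaf (resp.\ costack) $\Rightarrow$ $sh(n)$ (resp.\ $st(n)$) for all $n$'' by a formal Yoneda argument, and then to spend the work on the converse, where an arbitrary open cover is reduced to its finite subcovers by means of the filtered poset of finite subsets.

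For the first implication, suppose $\G$ is a cosheaf (resp.\ costack). By Definition \ref{cosh} (resp.\ \ref{cost}), for every open $U$ and every finite open cover $U_1,\dots,U_n$ of $U$, the groupoid $\G(U)$ is the colimit (resp.\ $2$-colimit) of the truncated \v{C}ech diagram of $\{U_i\}$. Applying $\hom_\Grp(-,{\sf G})$ for a fixed groupoid ${\sf G}$ and using that this functor sends colimits to limits and $2$-colimits to $2$-limits --- which is exactly the content of the comparison equivalences $c$ and $\kappa$ recalled in Section \ref{2lim} --- one obtains that $\F(U)=\hom_\Grp(\G(U),{\sf G})$ is the limit (resp.\ $2$-limit) of the diagram built from the $\F(U_i)$ and their intersections; that is, $\F$ satisfies $sh(n)$ (resp.\ $st(n)$). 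No hypothesis on $\G$ is needed here.

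For the converse, reading this translation backwards --- it is just the Yoneda lemma applied to the natural comparison functors $c,\kappa,\gamma$ of Section \ref{2lim} --- the hypothesis ``$\hom_\Grp(\G(-),{\sf G})$ satisfies $sh(n)$ (resp.\ $st(n)$) for all $n$ and all ${\sf G}$'' is equivalent to ``for every finite open cover $\{U_i\}_{i=1}^{n}$ of every open $U$, $\G(U)$ is the colimit (resp.\ $2$-colimit) of the associated truncated \v{C}ech diagram''. So it remains to deduce the same for an arbitrary open cover $U=\bigcup_{i\in I}U_i$, which is precisely the cosheaf (resp.\ costack) condition. Let $\mathcal J$ be the filtered poset of finite nonempty subsets of $I$ ordered by inclusion, and for $J\in\mathcal J$ set $U_J=\bigcup_{j\in J}U_j$; then $J\mapsto U_J$ is a functor $\mathcal J\to\mathfrak{Off}(X)$ with $\colim_J U_J=U$ (directed union), so $\G(U)=\colim_J\G(U_J)$ because $\G$ commutes with filtered colimits --- and, by Proposition \ref{filtered}, this filtered colimit coincides with the $2$-colimit $\tc_J\G(U_J)$, so the identity holds $2$-categorically as well. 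For each $J$ the finite family $\{U_j\}_{j\in J}$ covers $U_J$, so by hypothesis $\G(U_J)$ is the colimit (resp.\ $2$-colimit) of the truncated \v{C}ech diagram of $\{U_j\}_{j\in J}$, which is a \emph{finite} colimit (resp.\ $2$-colimit) of the groupoids $\G(U_j)$, $\G(U_j\cap U_{j'})$ (and, in the costack case, $\G(U_j\cap U_{j'}\cap U_{j''})$).

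In the cosheaf case one now concludes at once: colimits commute with colimits, so $\G(U)=\colim_J\G(U_J)$ is the coequalizer of $\colim_J\coprod_{j,j'\in J}\G(U_j\cap U_{j'})\rightrightarrows\colim_J\coprod_{j\in J}\G(U_j)$, and since $\{i\}$ and $\{i,i'\}$ belong to $\mathcal J$ for all $i,i'\in I$, these filtered colimits equal $\coprod_{i\in I}\G(U_i)$ and $\coprod_{i,i'\in I}\G(U_i\cap U_{i'})$ respectively, with transition maps the summand inclusions; this is exactly Definition \ref{cosh}. The costack case runs along the same lines, and this is the step I expect to be the main obstacle. Writing $D_J$ for the truncated \v{C}ech diagram of $\{U_j\}_{j\in J}$, viewed as a diagram over a fixed finite shape $\Lambda$, one has $\G(U)\simeq\tc_J(\tc_\Lambda D_J)$ and must identify this iterated $2$-colimit with $\tc_\Lambda D_I$, the $2$-colimit defining the costack condition. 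Since $(J,\ell)\mapsto D_J(\ell)$ is an honest functor on $\mathcal J\times\Lambda$ (the transition maps in the $J$-direction being summand inclusions), this is an instance of the interchange law $\tc_J\tc_\Lambda\simeq\tc_{\mathcal J\times\Lambda}\simeq\tc_\Lambda\tc_J$ for $2$-colimits of groupoids, combined with $\tc_J\coprod_{j\in J}\G(U_j)\simeq\colim_J\coprod_{j\in J}\G(U_j)=\coprod_{i\in I}\G(U_i)$ (Proposition \ref{filtered} together with the computation just made) and its analogues on double and triple intersections. To avoid citing interchange one can instead verify the universal property of $\tc_\Lambda D_I$ by hand: by Lemma \ref{delta} and Proposition \ref{filtered}, an object or a morphism of $\tc_J(\tc_\Lambda D_J)$ unwinds into a coherent system of cocones out of the $D_J$, and --- using that $\{i\}$, $\{i,i'\}$ and $\{i,i',i''\}$ all lie in $\mathcal J$ --- such a system amounts to a single cocone out of $D_I$ satisfying the $1$-cocycle condition. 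In either presentation the genuinely delicate part is tracking the natural transformations and their $1$-cocycle conditions through the filtered colimit; at the level of objects the statement is precisely the routine computation already done in the cosheaf case.
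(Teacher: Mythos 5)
Your proposal is correct and follows essentially the same route as the paper: translate the $sh(n)$/$st(n)$ hypothesis via Yoneda into the (2-)colimit condition for finite covers, exhaust an arbitrary cover by the filtered poset of finite subsets, use that $\G$ commutes with filtered colimits together with Proposition \ref{filtered}, and interchange (2-)colimits. The only difference is that you spell out the $2$-colimit interchange step explicitly, which the paper compresses into ``repeat the above computation replacing $\colim$ with $\tc$.''
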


\begin{proof} The 'only if' part is obvious. Assume $\F$ satisfies the condition ${\mathsf sh}(n)$ for all $n$.  We have to prove that $\G$ is a cosheaf. Let $U$ be an open set and $\{U_i\}_{i\in I}$ be a cover of $U$. Denote by $f(I)$ the set of finite subsets. We know that $f(I)$ is a filtered system for all $I$. For a fixed $\lambda\in f(I)$, denote by $U_\lambda$ the union of all $U_i$, with $i\in\lambda$. By assumption, $\G$ satisfies the cosheaf condition for any finite covering. Hence we have 
$$\xymatrix{\G(U)=\G(\colim\limits_{\lambda}U_{\lambda})=\colim\limits_{\lambda}\G(U_{\lambda})=
\colim\limits_{\lambda}(\colim[\coprod\limits_{i,j\in \lambda}\G(U_{ij}) \ar@<.5ex>[r]\ar@<-.5ex>[r] & \coprod\limits_{i\in\lambda}\G(U_i)])= }$$
$$\xymatrix{
\colim([\colim\limits_{\lambda}\coprod\limits_{i,j\in \lambda}\G(U_{ij}) \ar@<.5ex>[r]\ar@<-.5ex>[r] & \colim\limits_{\lambda}\coprod\limits_{i\in\lambda}\G(U_i)])=\colim[\coprod\limits_{i,j\in I}\G(U_{ij}) \ar@<.5ex>[r]\ar@<-.5ex>[r] & \coprod\limits_{i\in I}\G(U_i)] }$$
as desired. This shows that $G$ is a cosheaf.

If $\F$ satisfies the condition $st(n)$, then by Proposition \ref{filtered} we have 
$$\G(U)=\colim\limits_{\lambda}\G(U_{\lambda})\cong \tc\limits_{\lambda}\G(U_{\lambda})$$
Thus we can repeat the above computation replacing $\colim$ with $\tc$.
\end{proof}

\begin{proof} [Proof of Theorem \ref{Pi1}] By Theorem   \ref{vk} we know that the cosheaf  condition for the functor $U\mapsto \Pi_1(U)$ holds for coverings having only two members. By Proposition \ref{2}, for the same coverings the costack condition holds as well. Take  a groupoid $\sf G$ and consider the functor $F$ defined by
$$\F(U)=\hom_{\Grp}(\Pi_1(U),{\sf G})$$
Then $F$ satisfies both $sh(2)$ and $st(2)$ conditions. Hence for $F$  the conditions $sh(n)$ and $st(n)$ hold as well, thanks to  Lemma \ref{finite}. Since $\Pi_1$ commutes with filtered colimits, we can use Lemma \ref{filtered} to show that $U\mapsto \Pi_1(U)$ is both a cosheaf and a costack. 
\end{proof}

\end{document}